\documentclass[11pt, reqno]{amsart}
\usepackage{multicol,a4wide}
\usepackage{amsmath}
\usepackage{amsaddr}
\usepackage{color}
\usepackage{cmap,mathtools}
\usepackage{cite}
\usepackage{xcolor}
\usepackage{enumerate}
\usepackage{epsf}
\usepackage{amsbsy,amsmath}
\usepackage{mathtools}
\usepackage{mathrsfs}
\usepackage{amsfonts}
\usepackage{amssymb}
\usepackage{eucal}
\usepackage{graphics,mathrsfs}

\usepackage{amsthm}
\usepackage{xcolor}
\usepackage{hyperref}
\hypersetup{
    colorlinks=true,
    linkcolor=red,
    filecolor=magenta,      
    urlcolor=green,
    citecolor=forestgreen,
}
\definecolor{forestgreen}{rgb}{0.13, 0.85, 0.15}
\usepackage{orcidlink}

\makeatletter
\newcommand{\leqnomode}{\tagsleft@true\let\veqno\@@leqno}
\usepackage{yhmath,amsmath,amsfonts,amssymb,enumerate,amsthm,appendix,caption,lipsum,}
\usepackage[T1]{fontenc}
\usepackage{enumerate}
\usepackage{mathtools,bm}
\usepackage{cmap}
\usepackage{bigints}
\usepackage{esint}
\usepackage[margin=0.90in]{geometry}

\newtheorem{definition}{Definition}[section]
\newtheorem{theorem}[definition]{Theorem}

\newtheorem{remark}[definition]{Remark}

\newtheorem{lemma}[definition]{Lemma}

\numberwithin{equation}{section}

\DeclarePairedDelimiter\norm{\lVert}{\rVert}
\makeatletter
\let\oldnorm\norm
\def\norm{\@ifstar{\oldnorm}{\oldnorm*}}

\makeatletter

\DeclareMathAlphabet{\mathpzc}{T1}{pzc}{m}{it}

\title[On weak and viscosity solutions to nonhomogeneous equation]{On weak and viscosity solutions to a nonhomogeneous mixed local-nonlocal equation}
\author[R. Lakshmi and S. Ghosh]{R. Lakshmi$^1$\orcidlink{0009-0009-5796-1009} and Sekhar Ghosh$^{1,*}$\orcidlink{0000-0002-5082-2374}} 
\thanks{$^*$Corresponding author}
\subjclass[2020]{35D30, 35D40, 35R11, 35B51, 35R09, 35M12}
\keywords{Mixed local-nonlocal $p$-Laplacian, Viscosity Solutions, Weak Solutions, Comparison Principle}

\begin{document}
\maketitle
\centerline{$^1$Department of Mathematics, National Institute of Technology Calicut,}
\centerline{Kozhikode - 673601, Kerala, India.}
\centerline{Email: lakshmir1248@gmail.com, sekharghosh1234@gmail.com/sekharghosh@nitc.ac.in}

\begin{abstract} 
This paper explores the relationship between weak and viscosity solutions to a nonhomogeneous mixed local and non-local $p$-Laplace equation in a bounded Lipschitz domain in $\mathbb{R}^N$. Under certain conditions, we derive the comparison principle for weak subsolutions and weak supersolutions to the problem. For $1<p<\infty$, we establish that continuous weak supersolutions to the problem are viscosity supersolutions, using the comparison principle. Furthermore, we show that bounded viscosity supersolutions are weak supersolutions for $p \geq 2$.

\end{abstract}
\maketitle

	\section{Introduction and Main Results}
Equivalence between different notions of solutions to partial differential equations (PDEs) has been a prominent area of study due to its applications in various problems such as free boundary problems and approximation problems of PDEs (see \cite{JLP10, MO23, BLO20, IN10}).  The relation between different notions of solutions to problems also help to study the existence and associated properties of solutions. The investigation of equivalence between solutions to PDEs was initiated in the linear case with the Laplacian by Lions \cite{L83}, Ishii \cite{I95} (see also \cite{Lind16, CIL92}). Consider the problem involving the $p$-Laplacian
\begin{equation}\label{p-lap}
    -\Delta_p u:= \operatorname{div}(|\nabla u|^{p-2}\nabla u)=f(x,u,\nabla u) \text{ in } \Omega
\end{equation}
where $\Omega \subset \mathbb{R}^N$ is a bounded open set, $1<p<\infty$ and $\Delta_p$ is the $p$-Laplacian. The equivalence of weak and viscosity solutions to \eqref{p-lap} was established by Juutinen \textit{et al.} \cite{JLM01} under the homogeneous data $f\equiv 0$. The authors in \cite{JLM01} also proved that different versions of solutions coincide for the homogeneous parabolic $p$-Laplace equation. The authors in \cite{JLM01} used the concept of $p$-subharmonic and superharmonic functions to demonstrate the equivalence of solutions. Subsequently, a different proof is obtained for the equivalence of weak and viscosity solutions to \eqref{p-lap} by Julin and Juutinen \cite{JJ12} for $f \equiv 0$. In the same paper \cite{JJ12}, the equivalence of weak and viscosity solutions is also discussed for the nonhomogeneous case with $f:=f(x)$. Medina and Ochoa \cite{MO19} extended the result further, proving the equivalence between weak and viscosity solutions to \eqref{p-lap} for $f:=f(x,u,\nabla u(x))$ with certain growth restrictions. Moreover, the analysis of the relationship between different types of solutions has been extended to problems similar to \eqref{p-lap} involving $p(x)$-Laplacian (see Siltakoski \cite{S18} and Medina and Ochoa \cite{MO23}). It is noteworthy to mention that Fang \textit{et al.} \cite{FRZ24} explored the equivalence and regularity of solutions to a non-homogeneous double phase problem 
\begin{equation}\label{local DP}
    \operatorname{div}(|\nabla u|^{p-2}\nabla u+a(x)|\nabla u|^{q-2}\nabla u)=f(x,u,\nabla u) \text{ in }\Omega,
\end{equation}
with $a(x)\geq 0$ and $1 <p \leq q<\infty$ in a bounded domain $\Omega$. For further developments on the theory of viscosity solutions, we refer to \cite{Lind19, CIL92, K04, LM07} and the references therein.
\par Next, we move our attention to discuss the developments in studying the relation between various notions of solutions to the fractional problem
\begin{equation}\label{frac p-lap}
    (-\Delta_p)^s u=f(x,u,D_s^p u) \text{ in } \Omega,
\end{equation}
where $\Omega$ is a bounded open subset of $\mathbb{R}^N$, $0<s<1<p<\infty$, $(-\Delta_p)^s$ is the fractional $p$-Laplacian defined by
\begin{equation*}
    (-\Delta_p)^s u(x):=2\operatorname{P.V.}\int_{\mathbb{R}^N}\frac{|u(x)-u(y)|^{p-2}(u(x)-u(y))}{|x-y|^{N+sp}}dy.
\end{equation*}
$D_s^p u$ is the fractional gradient given by
\begin{equation*}
    D_s^p u(x):=\int_{\mathbb{R}^N}\frac{|u(x)-u(y)|^p}{|x-y|^{N+sp}}dy.
\end{equation*}
For the homogeneous case $f \equiv 0$, of \eqref{frac p-lap}, Korvenpää \textit{et al.} \cite{KKL19} guaranteed that weak, viscosity and $p$-harmonic solutions coincide. Later, Barrios and Medina \cite{BM21} derived that the weak and viscosity solutions are the same for \eqref{frac p-lap} with $f:=f(x,u, D_s^p u(x))$ by imposing some conditions on $f$. The analysis of the relation between solutions to a nonlocal analogue of the double phase problem \eqref{local DP} is investigated for the homogeneous case by Fang and Zhang \cite{FZ23} and for the nonhomogeneous case by Ghosh et al. \cite{GLZ26}.

\par Recently, there have also been studies on the equivalence of solutions to problems involving the mixed local and nonlocal operator of the following type: 
\begin{equation}\label{mixed p}
    \mathfrak{L}_{s,p}:=-\Delta_p+(-\Delta_p)^s.
\end{equation} 
The equivalence between weak and viscosity solutions involving $\mathfrak{L}_{s,p}$ has been derived in the homogeneous case by Lakshmi and Ghosh \cite{LG2026}. Shang and Zhang \cite{SZ23} have also explored a relation between solutions to a particular case of the nonhomogeneous equation involving $\mathfrak{L}_{s,p}$.

\par In this work, our objective is to study the relation between weak and viscosity solutions to the following mixed local and nonlocal problem in a bounded domain $\Omega \subset \mathbb{R}^N$ having Lipschitz boundary $\partial \Omega$, given by
\begin{align}\label{G}
   \mathfrak{L}_{s,p}u &=f(x,u, \nabla u, D_s^p u) \text{ in } \Omega,
\end{align}
where $0<s<1<p<\infty$. The operator $\mathfrak{L}_{s,p}$ is given by \eqref{mixed p} where we take $(-\Delta_p)^s$ to be a generalized version of the fractional $p$-Laplacian given by
\begin{equation}\label{gen frac p}
    (-\Delta_p)^s u(x):=2\operatorname{P.V.}\int_{\mathbb{R}^N}{|u(x)-u(y)|^{p-2}(u(x)-u(y))}{K_{s,p}(x,y)}dy.
\end{equation}
The kernel $K_{s,p}:\mathbb{R}^N \times \mathbb{R}^N \rightarrow \mathbb{R}$ is a measurable function satisfying the following conditions:
\begin{enumerate}[(i)]
    \item For all $x,y \in \mathbb{R}^N$, we have $K_{s,p}(x,y)=K_{s,p}(y,x).$ 
    \item $K_{s,p}(x+z,,y+z)=K_{s,p}(x,y)$ for all $x,y,z \in \mathbb{R}^N$.
    \item There exists $\Lambda >0$ such that for all $x\neq y \in \mathbb{R}^N$, we have
    \begin{equation}\label{G-kernel}
        \frac{1}{\Lambda} \frac{1}{| x-y |^{N+ps}} \leq K_{s,p}(x,y) \leq \Lambda \frac{1}{| x-y |^{N+ps}}.
    \end{equation} 
    \item The map $x \mapsto K_{s,p}(x,y)$ is continuous in $\mathbb{R}^N \setminus \{y\}$ for every $y \in \mathbb{R}^N $.
\end{enumerate}
\par An important tool required to establish the relation of weak and viscosity solutions to PDEs is the comparison principle. One may refer to \cite{MO19, BM21, DFR19, LG2026} etc. for some proofs of comparison principles obtained in local, nonlocal and mixed local-nonlocal problems. Before introducing the main theorems, we first give the following definition of comparison principle.
\begin{definition}\label{G-CP DFN}
    Let $\Omega' \subset \Omega$ be an open set. A weak supersolution $u$ of \eqref{G} in $\Omega'$ satisfies the comparison principle in $\Omega'$ if for every weak subsolution $w$ of \eqref{G} in $\Omega'$ satisfying $u \geq w$ in $\mathbb{R}^N \setminus \Omega'$, then we have $u \geq w$ in $\mathbb{R}^N$.
\end{definition}
We proved the comparison principle for weak supersolutions to \eqref{G}, on imposing certain restrictions on $f$ in Section \ref{G-s3}. Next, we provide the statement of our first main result.
\begin{theorem}\label{G-T1}
    Let $p\geq 2$, $f:=f(x,t,\eta,\zeta)$ be continuous with respect to $x,t,\eta$ and $\zeta$ and Lipschitz continuous with respect to $\eta$ and $\zeta$. Also, assume that the comparison principle (Definition \ref{G-CP DFN}) holds true in any open set $\Omega' \subset \Omega$. Then, every continuous weak supersolution to \eqref{G} is a viscosity supersolution to \eqref{G}. 
\end{theorem}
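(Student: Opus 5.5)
We argue by contradiction, following the scheme of Juutinen--Lindqvist--Manfredi and its mixed local--nonlocal adaptation in \cite{LG2026}, with the comparison principle doing the main work. Let $u$ be a continuous weak supersolution of \eqref{G} in $\Omega$. Suppose $u$ is not a viscosity supersolution. Then there are $x_0\in\Omega$ and a test function $\phi\in C^2(B_r(x_0))$ with the following properties: $\phi(x_0)=u(x_0)$, $\phi<u$ in $B_r(x_0)\setminus\{x_0\}$, $\phi=u$ outside $B_r(x_0)$ (the usual nonlocal convention), and $\nabla\phi(x_0)\neq 0$ when required by the definition. Despite this touching, the viscosity inequality fails:
\[
\mathfrak{L}_{s,p}\phi(x_0)<f\bigl(x_0,\phi(x_0),\nabla\phi(x_0),D^p_s\phi(x_0)\bigr).
\]

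\textbf{Step 1: make the failure strict on a small ball.} The function $x\mapsto \mathfrak{L}_{s,p}\phi(x)-f(x,\phi(x),\nabla\phi(x),D_s^p\phi(x))$ is continuous near $x_0$. For the local part this follows from $\phi\in C^2$ and $p\ge 2$. For the nonlocal parts we use kernel conditions (ii)--(iv), $u$ continuous and in the appropriate tail space, and $\phi$ being $C^2$ near $x_0$, and we split the principal value integral into near and far parts. Hence there is $\rho<r$ and $\varepsilon>0$ such that
\[
\mathfrak{L}_{s,p}\phi-f(x,\phi,\nabla\phi,D^p_s\phi)\le -\varepsilon \quad\text{in } B_\rho(x_0).
\]
This pointwise inequality makes $\phi$ a classical, hence weak, strict subsolution in $B_\rho(x_0)$; the weak form is obtained by integrating by parts against nonnegative test functions.

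\textbf{Step 2: lift $\phi$ slightly.} Let $m=\inf_{B_r\setminus B_{\rho/2}}(u-\phi)>0$, which is positive by continuity and the strict touching. Define $\phi_\delta=\phi+\delta$ in $B_\rho(x_0)$ and $\phi_\delta=\phi$ elsewhere, or use a smooth bump $\delta\eta$ with $\eta=1$ on $B_{\rho/2}$, supported in $B_\rho$, to keep regularity. For small $\delta<m$ we need $\phi_\delta$ to remain a weak subsolution in $\Omega'=B_{\rho/2}(x_0)$. There are three effects to control:
\begin{itemize}
\item In $B_{\rho/2}$ the local term is unchanged.
\item The nonlocal term changes by an amount controlled by $\delta$ times a constant, using $p\ge2$, where $t\mapsto|t|^{p-2}t$ is locally Lipschitz on bounded sets.
\item The right-hand side changes by at most $C\delta$, by the Lipschitz continuity of $f$ in $\eta,\zeta$ and the continuity in $t$ on compacts.
\end{itemize}
Choosing $\delta$ small, this total perturbation is below $\varepsilon$. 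Moreover $\phi_\delta\le u$ on $\mathbb{R}^N\setminus B_{\rho/2}(x_0)$ because $\delta<m$, and $\phi\le u$ everywhere.

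\textbf{Step 3: apply comparison and conclude.} By the assumed comparison principle (Definition \ref{G-CP DFN}) in $\Omega'=B_{\rho/2}(x_0)$, we get $u\ge\phi_\delta$ in $\mathbb{R}^N$. At $x_0$ this gives $u(x_0)\ge\phi(x_0)+\delta>u(x_0)$, a contradiction.

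\textbf{Main obstacle.} The hardest part is Step 1 together with the perturbation estimate in Step 2. We must show that the nonlocal quantities $(-\Delta_p)^s\phi_\delta$ and $D_s^p\phi_\delta$ depend continuously on $x$ and on $\delta$, even though $\phi$ is only $C^2$ near $x_0$ and equals $u$ far away. This requires splitting each integral into $B_\sigma(x)$ and its complement:
\begin{itemize}
\item On $B_\sigma(x)$, the $C^2$ bound gives $|\phi(x)-\phi(y)|^{p-1}\lesssim|x-y|^{p-1}$. For the principal value this is integrable when $p\ge 2$, using symmetry of the second-order Taylor term; for $D_s^p$ it gives $|x-y|^{p}$, which is integrable.
\item On the complement we use dominated convergence.
\end{itemize}
If the case $\nabla\phi(x_0)=0$ is not excluded by the definition, this is also where it must be handled; for $p\ge2$ the local term is still continuous there.
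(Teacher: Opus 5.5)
Your proof is correct in outline and has the same architecture as the paper's: contradiction, a strict classical subsolution near $x_0$ from continuity of $\mathfrak{L}_{s,p}\phi$ and of the right-hand side, a small upward perturbation, then comparison. The perturbation, however, is built differently, and the differences matter.

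\emph{The paper's construction.} The paper adds $\alpha\phi$ with $\phi\in C_c^2(B_{r'/2}(x_0))$, $\phi(x_0)=1$, and compares on $B_{r'}(x_0)$. Because the bump vanishes off the comparison ball, the exterior condition $\psi_\alpha\le u$ follows directly from $\psi\le u$, so neither strict touching nor a gap $m>0$ is needed. The price is that the gradient and the local term change inside the ball. This is exactly where the paper uses $p\ge 2$ (uniform convergence $\Delta_p(\psi+\alpha\phi)\to\Delta_p\psi$), together with \cite[Lemma 3.9]{KKL19} and the Lipschitz bound in $\eta$.

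\emph{Your construction.} Your bump is $\equiv 1$ on the comparison ball, so $\nabla\phi_\delta$ and $\Delta_p\phi_\delta$ are unchanged there; only the nonlocal quantities and the $t$-slot move. In exchange you need strict touching (Remark \ref{G-VS-R}) to keep $\phi_\delta\le u$ on the annulus. This is essentially the construction the paper itself uses for Theorem \ref{G-T3}, and with it the restriction $p\ge 2$ is genuinely needed only in your Step~1.

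\emph{Treatment of $t$.} Your route handles the $t$-variable more faithfully. The paper freezes $t=u(x)$ and then applies comparison to the auxiliary problem \eqref{G'}, which is not literally what is assumed. You keep $t=\phi_\delta(x)$ and absorb the shift by uniform continuity of $f$ on compacts (this gives $o(1)$, not necessarily $C\delta$). So $\phi_\delta$ is a weak subsolution of \eqref{G} itself, and the assumed comparison principle applies verbatim.

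\emph{Minor fixes.}
\begin{enumerate}
\item Take $m=\inf_{\overline{B_\rho}\setminus B_{\rho/2}}(u-\phi)$. Over the non-compact set $B_r\setminus B_{\rho/2}$ the infimum can be $0$ near $\partial B_r$, and you only need the gap where the bump lives.
\item Drop the convention $\phi=u$ outside $B_r$. Definition \ref{G-VS} does not provide it, and it cannot be imposed for free here: it changes $D_s^p\phi(x_0)$ by a non-small amount, and $f$ is not monotone in $\zeta$, so the failed viscosity inequality need not survive. You never use it anyway, only $\phi\le u$.
\item For $x$ near $\partial B_{\rho/2}$ the nonlocal difference is singular at $y=x$. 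Either compare on $B_{\rho/4}$, so the integrands differ only when $|x-y|\ge\rho/4$, or use that $1-\eta$ vanishes to high order on $\partial B_{\rho/2}$.
\item In the principal-value estimate it is the odd first-order Taylor term that cancels, leaving an $O(|x-y|^p)$ remainder. It is not the second-order term.
\end{enumerate}
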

Due to the lack of continuity of $\mathfrak{L}_{s,p} u$ in the range $1<p<2$ even for smooth functions $u$ at points of vanishing gradients, we could not extend Theorem \ref{G-T1} to the case $1<p<2$. However, we have attained that under a different set of assumptions, the weak solutions to \eqref{G} are viscosity solutions to the same, which is stated as below.
\begin{theorem}\label{G-T3}
    Let $1<p<2$, and let $f:=f(x,t,\eta,\zeta)$ be continuous with respect to $x,t,\eta$, uniformly continuous with respect to $\zeta$ and non-decreasing in $t$. Also, assume that the comparison principle (Definition \ref{G-CP DFN}) holds in any open set $\Omega' \subset \Omega$. Then, every continuous weak supersolution to \eqref{G} is a viscosity supersolution to \eqref{G}. 
\end{theorem}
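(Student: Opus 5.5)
We argue by contradiction and use the comparison principle, adapting the strategy of Julin--Juutinen and its mixed local-nonlocal version in \cite{LG2026}. Let $u$ be a continuous weak supersolution of \eqref{G}. Suppose $u$ is not a viscosity supersolution. Then there are $x_0\in\Omega$, a radius $r>0$ and a test function $\varphi\in C^2(B_r(x_0))$ with the following properties: $\varphi(x_0)=u(x_0)$, $\varphi<u$ in $B_r(x_0)\setminus\{x_0\}$, $\varphi=u$ outside $B_r(x_0)$, and
\[
\mathfrak{L}_{s,p}\varphi(x_0)<f\bigl(x_0,\varphi(x_0),\nabla\varphi(x_0),D_s^p\varphi(x_0)\bigr).
\]
In the range $1<p<2$ the viscosity definition has to treat critical points specially. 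If $\nabla\varphi(x_0)=0$, we restrict to test functions of the form $\varphi(x)=u(x_0)-\kappa|x-x_0|^{\beta}$ with $\beta>p/(p-1)$, or to points that are isolated critical points. This is where the failure of continuity of $\mathfrak{L}_{s,p}\varphi$ is handled. In either case we first show that the strict inequality persists in a neighbourhood. More precisely, we find $\rho\in(0,r)$ and $\varepsilon>0$ such that for all $x\in B_\rho(x_0)$,
\[
\mathfrak{L}_{s,p}\varphi(x)\le f\bigl(x,\varphi(x),\nabla\varphi(x),D_s^p\varphi(x)\bigr)-\varepsilon.
\]
For the local part we use $C^2$ regularity away from $x_0$ together with the explicit radial structure at $x_0$. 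For the nonlocal part we split the integral into a small ball, controlled by the $C^2$ bound and the choice of $\beta$, and its complement, which is continuous in $x$ by dominated convergence and property (iv) of the kernel. For the right-hand side we use continuity of $f$ in $x,t,\eta$ and uniform continuity in $\zeta$, together with continuity of $x\mapsto D_s^p\varphi(x)$, which follows from the same splitting.

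Next, we perturb. Set $\psi=\varphi+\delta$ in $B_\rho(x_0)$ and $\psi=u$ outside, with $\delta>0$ small chosen so that $\psi\le u$ in $\mathbb{R}^N\setminus B_{\rho/2}(x_0)$. This is possible because $m:=\min_{\rho/2\le|x-x_0|\le\rho}(u-\varphi)>0$, so any $\delta<m$ works. We must check that $\psi$ is a weak subsolution in $B_{\rho/2}(x_0)$. The pointwise inequality above passes to the weak formulation by integrating against nonnegative test functions, since $\varphi$ is smooth enough for $\mathfrak{L}_{s,p}\varphi$ to be integrable. However, replacing $\varphi$ by $\psi$ changes the problem in three ways.

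\begin{enumerate}[(a)]
\item The constant shift $\delta$ does not affect $\nabla\psi$ or the differences $\psi(x)-\psi(y)$ for $y\in B_\rho(x_0)$. It does change the argument $t$ in $f$. Since $f$ is non-decreasing in $t$, we get $f(x,\varphi,\cdot,\cdot)\le f(x,\varphi+\delta,\cdot,\cdot)$, so the inequality keeps the right direction.
\item Gluing $u$ outside $B_\rho(x_0)$ changes the nonlocal terms only through $y\notin B_\rho(x_0)$. There $\psi(y)=u(y)=\varphi(y)$, so the kernel difference comes only from the shift: $\varphi(x)+\delta-u(y)$ replaces $\varphi(x)-u(y)$. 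This increases $(-\Delta_p)^s\psi(x)$ by an amount that tends to $0$ as $\delta\to0$, uniformly for $x\in B_{\rho/2}(x_0)$. It also perturbs $D_s^p\psi$ by a quantity that is $o(1)$.
\item By the uniform continuity of $f$ in $\zeta$, the resulting error in $f$ is also $o(1)$.
\end{enumerate}

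Both errors are absorbed by $\varepsilon$ once $\delta$ is small. Hence $\psi$ is a weak subsolution in $B_{\rho/2}(x_0)$ with $\psi\le u$ outside. The assumed comparison principle then gives $\psi\le u$ in $\mathbb{R}^N$. But $\psi(x_0)=u(x_0)+\delta>u(x_0)$, a contradiction.

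The main obstacle is the first step when $\nabla\varphi(x_0)=0$ and $1<p<2$. There $|\nabla\varphi|^{p-2}$ is singular, and the strict inequality does not automatically propagate to a neighbourhood. We would follow \cite{JJ12,LG2026}. Either we use the restricted class of test functions at critical points, for which $\Delta_p\varphi$ is continuous, or we argue that $x_0$ is an isolated critical point and treat $\Delta_p\varphi$ in the distributional sense near it. In both cases the weak inequality is verified on the punctured ball, and then extended across $x_0$ using the local integrability of $|\nabla\varphi|^{p-1}$.
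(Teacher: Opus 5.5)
Your overall plan matches the paper's: propagate the strict violation to a punctured neighbourhood, lift the test function by a small constant near $x_0$, use monotonicity of $f$ in $t$ and uniform continuity in $\zeta$, then invoke comparison. However, the construction breaks at step (b). You set $\psi=u$ off $B_\rho(x_0)$, but you only arranged $\varphi=u$ off $B_r(x_0)$, with $\rho<r$. On the annulus $B_r(x_0)\setminus B_\rho(x_0)$ you therefore have $\psi=u>\varphi$, where $u-\varphi$ is a fixed positive function that does not shrink as $\delta\to0$. So the assertion ``there $\psi(y)=u(y)=\varphi(y)$'' is false, and $D_s^p\psi(x)-D_s^p\varphi(x)$ does not tend to $0$ as $\delta\to0$ for $x\in B_{\rho/2}(x_0)$. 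Gluing $u$ does lower $(-\Delta_p)^s\psi$. But $f$ is only uniformly continuous in $\zeta$, not monotone, so the right-hand side $f(x,\psi,\nabla\psi,D_s^p\psi)$ is uncontrolled and the weak subsolution property of $\psi$ is not established.

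The same objection applies to your preliminary normalization $\varphi=u$ outside $B_r(x_0)$. Replacing the tail of the violating test function by $u$ changes $D_s^p\varphi(x_0)$ by an uncontrolled amount; it may even become infinite, since $u$ is only in $L^{p-1}_{s,p}(\mathbb{R}^N)$. Hence the violated inequality at $x_0$ need not survive. That normalization is harmless when $f$ does not depend on $\zeta$; here the dependence on $D_s^p$ is exactly what rules it out.

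\textbf{The missing idea.} Every change of the test function away from $x_0$ must be quantitatively small, so that $D_s^p$ moves by less than the modulus of continuity of $f$ in $\zeta$. Drop the gluing: keep the original $\varphi\le u$ on all of $\mathbb{R}^N$ and take $\psi=\varphi+\delta\chi_{B_\rho(x_0)}$. For $x\in B_{\rho/2}(x_0)$ and $y\notin B_\rho(x_0)$ you then have $\psi(x)-\psi(y)=\varphi(x)-\varphi(y)+\delta$ and $|x-y|\ge|y-x_0|/2$. The estimates $|h(a+\delta)-h(a)|\le C\delta^{p-1}$ (valid for $1<p<2$) and $\bigl||a+\delta|^p-|a|^p\bigr|\le C\delta(|a|+\delta)^{p-1}$, together with $\varphi\in L^{p-1}_{s,p}(\mathbb{R}^N)$, give the $o(1)$ bounds you claim.

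This is the paper's argument in a slightly different form. The paper lowers the test function outside by $cw$, where $w$ is a smooth cutoff vanishing on $B_{r'/2}(x_0)$ and equal to $1$ off $B_{r'}(x_0)$. It proves $|\mathfrak{L}_{s,p}\psi-\mathfrak{L}_{s,p}\phi|\le Cc^{p-1}$ and $|D_s^p\psi-D_s^p\phi|\le Cc(1+c^{p-1})$ on $B_{r'/4}(x_0)$. It then adds the global constant $m=\inf_{\mathbb{R}^N\setminus B_{r'/4}(x_0)}(u-\phi)>0$. This constant leaves $\mathfrak{L}_{s,p}$ and $D_s^p$ untouched and only enters the $t$-slot, where monotonicity applies.

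\textbf{Critical points.} You may not restrict to radial test functions $u(x_0)-\kappa|x-x_0|^\beta$, since every admissible $\psi$ in Definition \ref{G-VS} must be handled. This restriction is unnecessary anyway. The limsup formulation of that definition, together with continuity of the nonlocal terms, already gives the inequality on a punctured ball. Your removal of a small ball around $x_0$ then correctly passes it to the weak formulation, since the boundary term vanishes because $|\nabla\varphi|^{p-1}$ is bounded. The paper leaves that step implicit.
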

Finally, we also prove the following result on bounded viscosity solutions and weak solutions to \eqref{G}.
\begin{theorem}\label{G-T2}
      Let $2\leq p<\infty$, $f:=f(x,t,\eta,\zeta)$ be a uniformly continuous function which is non-increasing in t, Lipschitz continuous in $\eta$ and $\zeta$. Also, assume that there exist continuous functions $g_1,g_2:\mathbb{R}\rightarrow \mathbb{R}$, and a function $g_3\in L^\infty_{\text{loc}}(\Omega)$ such that
\begin{equation}\label{G-f condn}
   |f(x,t,\eta,\zeta)|\leq g_1(|t|)|\eta|^{p-1}+g_2(|t|)|\zeta|^{\frac{p-1}{p}}+g_3(x), (x,t,\eta,\zeta)\in \Omega \times \mathbb{R}\times \mathbb{R}^N \times \mathbb{R}.
\end{equation}
Let $u$ be a bounded viscosity supersolution to \eqref{G}. Then, $u$ is a weak supersolution to \eqref{G}.
\end{theorem}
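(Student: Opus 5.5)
The plan is to follow the inf-convolution strategy of Julin--Juutinen \cite{JJ12}, adapted to the nonlocal setting as in \cite{KKL19, BM21, LG2026}. Let $u$ be a bounded viscosity supersolution to \eqref{G}. For $\varepsilon>0$ and $q\ge 2$ large (say $q>p/(p-1)$), consider the inf-convolution
\begin{equation*}
u_\varepsilon(x):=\inf_{y\in\mathbb{R}^N}\Big(u(y)+\frac{|x-y|^q}{q\varepsilon^{q-1}}\Big).
\end{equation*}
Since $u$ is bounded and lower semicontinuous, $u_\varepsilon$ is well defined, semiconcave on compacts, increases pointwise to $u$ as $\varepsilon\to0$, and the infimum is attained at a point $x_\varepsilon$ with $|x-x_\varepsilon|\le C(\|u\|_\infty)\varepsilon^{(q-1)/q}$.

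The first step is to show that $u_\varepsilon$ is a viscosity supersolution, in a shrunken domain $\Omega_{r(\varepsilon)}=\{x\in\Omega:\operatorname{dist}(x,\partial\Omega)>r(\varepsilon)\}$, of a perturbed equation
\begin{equation*}
\mathfrak{L}_{s,p}u_\varepsilon\ge f_\varepsilon(x,u_\varepsilon,\nabla u_\varepsilon,D^p_s u_\varepsilon).
\end{equation*}
Here $f_\varepsilon(x,t,\eta,\zeta):=\inf_{|z-x|\le r(\varepsilon)}f(z,t,\eta,\zeta)$. To prove this, translate test functions: if $\varphi$ touches $u_\varepsilon$ from below at $x_0$, then $\varphi(\cdot+x_0-x_{0,\varepsilon})$ touches $u$ from below at $x_{0,\varepsilon}$. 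The translation invariance (ii) of $K_{s,p}$ makes the nonlocal term and $D^p_s$ invariant under this shift. The monotonicity of $f$ in $t$ handles the change $u(x_{0,\varepsilon})\le u_\varepsilon(x_0)$, and the uniform continuity of $f$ handles the shift in $x$.

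The second step is to pass from viscosity to weak supersolution for $u_\varepsilon$. Because $u_\varepsilon$ is semiconcave, it is locally Lipschitz and twice differentiable a.e. by Alexandrov's theorem, and the viscosity inequality holds pointwise a.e. (for $p\ge2$ the operator is continuous even where $\nabla u_\varepsilon=0$). We then multiply by a nonnegative test function $\phi\in C_c^\infty(\Omega)$ and integrate. To justify integrating by parts in the local term, mollify $u_\varepsilon$ to $u_{\varepsilon,j}$. Semiconcavity gives $D^2u_{\varepsilon,j}\le C/\varepsilon\,I$, so the distributional Hessian's singular part has a sign, and Fatou's lemma yields
\begin{equation*}
\int_\Omega |\nabla u_\varepsilon|^{p-2}\nabla u_\varepsilon\cdot\nabla\phi\,dx+\iint \frac{J_p(u_\varepsilon(x)-u_\varepsilon(y))(\phi(x)-\phi(y))}{1}K_{s,p}\,dxdy\ge\int_\Omega f_\varepsilon(x,u_\varepsilon,\nabla u_\varepsilon,D^p_su_\varepsilon)\phi\,dx,
\end{equation*}
where $J_p(t)=|t|^{p-2}t$. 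In the nonlocal part, the principal value is handled by the $C^{1,1}$-from-above bound near the diagonal and boundedness of $u$ away from it, following \cite{KKL19}.

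The third step, which I expect to be the main obstacle, is to obtain uniform estimates and pass to the limit $\varepsilon\to0$. Test the weak inequality with $\phi=\xi^p(M-u_\varepsilon)$, where $M\ge\sup|u|$ and $\xi$ is a cutoff. Using the growth condition \eqref{G-f condn}, Young's inequality absorbs the terms $g_1|\nabla u_\varepsilon|^{p-1}$ and $g_2|D^p_su_\varepsilon|^{(p-1)/p}$ into the left-hand side; $g_1,g_2$ are bounded because $u$ is bounded. The nonlocal tail terms are controlled by $\|u\|_\infty$. This gives a Caccioppoli estimate making $\nabla u_\varepsilon$ bounded in $L^p_{loc}$ and $u_\varepsilon$ bounded in $W^{s,p}_{loc}$, uniformly in $\varepsilon$. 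Hence $u\in W^{1,p}_{loc}(\Omega)$ and $u_\varepsilon\rightharpoonup u$.

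Weak convergence is not enough to pass to the limit in the nonlinear terms $|\nabla u_\varepsilon|^{p-2}\nabla u_\varepsilon$ and $f(\cdot,\nabla u_\varepsilon,D^p_su_\varepsilon)$. So next I would prove $\nabla u_\varepsilon\to\nabla u$ a.e., or strongly in $L^{r}_{loc}$ for $r<p$. To do this, test the difference of the inequalities for $u_\varepsilon$ and $u_\delta$ with $\xi^p(u_\delta-u_\varepsilon)$ truncated, and use the monotonicity $(J_p(a)-J_p(b))\cdot(a-b)\ge c|a-b|^p$ for $p\ge2$, together with the monotone convergence $u_\varepsilon\uparrow u$, as in \cite{JJ12}. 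Strong convergence of $D^p_su_\varepsilon$ comes similarly from the nonlocal monotonicity. The Lipschitz continuity of $f$ in $(\eta,\zeta)$ and the uniform continuity in $x$ then let us pass to the limit on the right-hand side via dominated convergence, using \eqref{G-f condn} and the uniform bounds. The left-hand side converges by the strong convergence just described. This concludes that $u$ is a weak supersolution to \eqref{G}.
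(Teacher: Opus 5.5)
Your first two steps and the Caccioppoli estimate follow the paper's route. These correspond to Lemma~\ref{G-L2}, with $f^\epsilon(x,t,\eta,\zeta)=\inf_{y\in B_{r(\epsilon)}(x)}f(y,t,\eta,\zeta)$, to Lemma~\ref{G-L3}, and to Lemma~\ref{G-L5} with test function $(\sup u-u_\epsilon)\psi^p$.

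The gap is in the strong-convergence argument of your final paragraph, which is the part you flag as the main obstacle. There is no ``difference of the inequalities for $u_\varepsilon$ and $u_\delta$'' to take. Both functions are only weak \emph{super}solutions, so each inequality can be tested only with nonnegative functions, and both point the same way. For $\delta<\varepsilon$ you have $u_\varepsilon\le u_\delta$. To bound $\int\xi^p\big(J_p(\nabla u_\delta)-J_p(\nabla u_\varepsilon)\big)\cdot\nabla(u_\delta-u_\varepsilon)\,dx$ from above, you need an \emph{upper} bound on $\int\xi^pJ_p(\nabla u_\delta)\cdot\nabla(u_\delta-u_\varepsilon)\,dx$. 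That would require testing the inequality for $u_\delta$ with $-\xi^p(u_\delta-u_\varepsilon)\le 0$, which is not allowed, and truncation does not change the sign. Monotone convergence $u_\varepsilon\uparrow u$ does not rescue it either: splitting through $u$ leaves the cross term $\int\xi^pJ_p(\nabla u_\delta)\cdot\nabla(u-u_\varepsilon)\,dx$, which pairs two sequences that converge only weakly. The subtraction trick works for solutions, where equality holds, but not for two supersolutions.

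The missing idea is to compare $u_\varepsilon$ with the \emph{fixed} limit $u$, which is legitimate once the Caccioppoli step gives $u\in W^{1,p}_{\mathrm{loc}}(\Omega)\cap L^\infty(\mathbb{R}^N)$. Since $u_\varepsilon\le u$, the function $\xi^p(u-u_\varepsilon)\ge 0$ is admissible in the inequality for $u_\varepsilon$ alone. Equivalently, you can test with $\xi^p(u_\delta-u_\varepsilon)$ and let $\delta\to0$ first, because the inequality is linear in the test function. Writing $J_p(\nabla u_\varepsilon)=J_p(\nabla u)-\big(J_p(\nabla u)-J_p(\nabla u_\varepsilon)\big)$ gives
\begin{align*}
\int\xi^p\big(J_p(\nabla u)-J_p(\nabla u_\varepsilon)\big)\cdot\nabla(u-u_\varepsilon)\,dx&\le\int\xi^pJ_p(\nabla u)\cdot\nabla(u-u_\varepsilon)\,dx+p\int\xi^{p-1}(u-u_\varepsilon)J_p(\nabla u_\varepsilon)\cdot\nabla\xi\,dx\\
&\quad-\int f_\varepsilon(x,u_\varepsilon,\nabla u_\varepsilon,D^p_su_\varepsilon)\,\xi^p(u-u_\varepsilon)\,dx+(\text{nonlocal terms}).
\end{align*}
Each term on the right tends to $0$:
\begin{itemize}
\item The first term vanishes by weak convergence, because $\xi^pJ_p(\nabla u)\in L^{p/(p-1)}$ does not depend on $\varepsilon$.
\item The next two vanish because $u_\varepsilon\to u$ in $L^p_{\mathrm{loc}}$ by bounded monotone convergence. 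Meanwhile $J_p(\nabla u_\varepsilon)$ stays bounded in $L^{p/(p-1)}$ on the support of $\xi$, and so does $f_\varepsilon(x,u_\varepsilon,\nabla u_\varepsilon,D^p_su_\varepsilon)$ by \eqref{G-f condn} and the Caccioppoli bound.
\item The nonlocal terms are handled the same way, with $J_p(u(x)-u(y))$ as the fixed factor.
\end{itemize}
The $p\ge2$ monotonicity then yields $\int_K|\nabla u_\varepsilon-\nabla u|^p\,dx+\int_K\int_{\mathbb{R}^N}|u_\varepsilon(x)-u_\varepsilon(y)-u(x)+u(y)|^pK_{s,p}(x,y)\,dy\,dx\to0$. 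This is what the paper proves, and it is exactly the hypothesis of Lemma~\ref{G-L4} needed to pass to the limit on the right-hand side.

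Two minor points in your first step:
\begin{itemize}
\item The translated test function must also be lowered by $|b|^q/(q\varepsilon^{q-1})$ to touch $u$ at $x_0+b$.
\item The shift in $x$ is absorbed by defining $f_\varepsilon$ as an infimum, so uniform continuity of $f$ in $x$ is needed only at the end, to get $f_\varepsilon\to f$.
\end{itemize}
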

We have not been able to generalize Theorem \ref{G-T2} to the case $1<p<2$ due to the varying behaviour and singulairites of the $p$-Laplacian part and fractional $p$-Laplacian part of $\mathfrak{L}_{s,p}$ in this range (see \cite[Remark 1.2]{LG2026} for more details.).
 \par We conclude this section by providing an overview of the structure of the paper. In Section \ref{G-s2}, we discuss the basic definitions, notations and results required to carry out our study. In Section \ref{G-s3}, we prove the comparison principle for a subcase of \eqref{G} and prove Theorem \ref{G-T1} and Theorem \ref{G-T3}. Lastly, in Section \ref{G-T2}, we establish Theorem \ref{G-T2}.

\section{Preliminaries tools and function space setup}\label{G-s2}
\noindent We begin this section by recalling certain fundamental definitions, which are prerequisites to our study. 
Recall the definitions of the Sobolev space and fractional Sobolev spaces (see \cite{DD12, DPV12, L23}). For $1\leq p<\infty$, the Sobolev space $W^{1,p}(\Omega)$ is given by
\begin{equation*}
    W^{1,p}(\Omega)=\{u \in L^p(\Omega): \frac{\partial u}{\partial x_i} \in L^p(\Omega) \text{ for } i\in[1, N] \},  \quad p\in [1,\infty),
\end{equation*}
where $\Omega \subset \mathbb{R}^N, \ N\geq2$, is an open set. The space $W^{1,p}(\Omega)$ is endowed with the norm $\|u\|_{W^{1,p}(\Omega)}=\Big( \|u\|_{L^p(\Omega)}^p +\|\nabla u\|_{L^p(\Omega)}^p \Big)^{\frac{1}{p}}$.
For $0<s<1\leq p<\infty$, the fractional Sobolev space $W^{s,p}(\Omega)$ is defined by
\begin{equation*}
    W^{s,p}(\Omega)=\Big\{u \in L^p(\Omega): \int_\Omega \int_\Omega \frac{| u(x)-u(y)|^p}{| x-y |^{N+ps}}dxdy <\infty\Big\},
\end{equation*}
endowed with the norm $\|u\|_{W^{s,p}(\Omega)}=\left( \|u\|_{L^p(\Omega)}^p +[u]_{W^{s,p}(\Omega)}^p \right)^{\frac{1}{p}}$, where $[\cdot]_{W^{s,p}(\Omega)}$ is the Gagliardo semi-norm defined by
\begin{equation*}
    [u]_{W^{s,p}(\Omega)}^p=\int_\Omega \int_\Omega \frac{| u(x)-u(y)|^p}{| x-y |^{N+ps}}dxdy.
\end{equation*}
Before defining viscosity solutions, we first present the notion of tail spaces (see \cite{DKP16}). For $0<s<1<p<\infty$, the \textit{Tail space} $L_{s,p}(\mathbb{R}^N)$ is defined by
\begin{equation*}
    L_{s,p}^{p-1}(\mathbb{R}^N)=\left\{h\in L^{p-1}_{\text{loc}}(\mathbb{R}^N) : \int_{\mathbb{R}^N}\frac{|h(x)|^{p-1}}{(1+|x|)^{N+sp}}dx<\infty \right\},
\end{equation*}
and the Tail of a function $v$ concerning the ball $B_r(x_0)$ is given by
$$\operatorname{Tail}(h;z,r)=\left(r^{sp}\int_{\mathbb{R}^N \setminus B_r(z)}\frac{|h(x)|^{p-1}}{|x-z|^{N+sp}}dx\right)^{\frac{1}{p-1}}.$$
Clearly, $\operatorname{Tail}(h;x_0,r)$ is well-defined and finite for a function $h\in  L_{s,p}^{p-1}(\mathbb{R}^N)$ at all points $z\in \mathbb{R}^N$ and $r>0$.

\par To formulate the notion of viscosity solutions for \eqref{G}, an appropriate class of test functions is required. In particular, when $1<p\leq \frac{2}{2-s}$, the nonlocal operator $\mathfrak{L}_{s,p}\psi$ may fail to be well defined for arbitrary $C^2$ functions. To overcome this difficulty, one works with the class of $C_\beta^2$-functions, which ensures the well-definedness of the nonlocal term; see \cite[Section 2]{KKL19} for further details. We now recall the definition of the class $C_\beta^2$.
\begin{equation}\label{C_beta^2}
    C_\beta^2(A)=\left\{ h \in C^2(A): \sup\limits_{x\in A} \left( \frac{\min\{d_h(x),1\}^{\beta-1}}{|\nabla h(x)|} +\frac{|D^2h(x)}{d_h(x)^{\beta-2}} \right)<\infty\right\},
\end{equation}
where $N_h=\{x\in\Omega:\nabla h(x)=0\}$ denotes the critical set of $h$, and $d_h(x):=\operatorname{dist}(x,N_h)$ is the distance from $x$ to $N_h$. Motivated by the notions of viscosity solutions developed in \cite{KKL19,MO19,BM21,SZ23,LG2026}, we now introduce the concept of viscosity solutions for problem \eqref{G}.
\begin{definition}\label{G-VS}
    A function $u:\mathbb{R}^N \rightarrow [-\infty, \infty]$, which is lower semi-continuous (upper semi-continuous), is called a viscosity supersolution (subsolution) to the problem \eqref{G} if $u$ satisfies the conditions listed below.
    \begin{enumerate}[(a)]
        \item $u_- (u_+) \in L_{s,p}^{p-1}(\mathbb{R}^N)$, where $u_-=\max\{0,-u\}$.
        \item $u<\infty$ ($u>-\infty$) a.e. in $\mathbb{R}^N$ and $u>-\infty$ ($u<\infty$) in $\Omega$.
        \item If $B_r(x_0) \subset \Omega$ and $\psi \in C^2(B_r(x_0)) \cap L_{s,p}^{p-1}(\mathbb{R}^N)$ with {$\psi \leq u$ ($\psi \geq u$) in $\mathbb{R}^N$,} $\psi(x_0)=u(x_0)$ and if one of the following conditions hold for $p\in(1,2)$:
        \begin{enumerate}[(i)]
            \item if $p\in (1,2)$, then $|\nabla \psi(x)|\neq 0$ whenever $x\in B_r(x_0)\setminus \{x_0\}$,
            \item if $p\in (1, \frac{2}{2-s}]$ and $\nabla \psi(x_0)|=0$, then there exists $\beta>\frac{sp}{p-1}$ such that $\psi\in C_\beta^2(B_r(x_0))$,
        \end{enumerate}
        then $$\lim\limits_{r\rightarrow 0}\sup\limits_{x\in B_r(x_0)\setminus \{x_0\}}-\Delta_p \psi(x)+(-\Delta_p)^s \psi(x_0) \geq (\leq ) f(x_0,\psi(x_0),\nabla \psi(x_0), D_s^p\psi(x_0)).$$
    \end{enumerate}
    A function $u$ is a viscosity solution to the problem \eqref{G} if it is both a viscosity subsolution and supersolution to \eqref{G}.
 \end{definition}
 \begin{remark}\label{G-VS-R}
     The condition $\psi \leq u$ in Definition \ref{G-VS} can be replaced with the combination of the two conditions $\psi \leq u$ and $\psi<u$ in a neighbourhood of $x_0$.
 \end{remark}
To study weak solutions of equations involving the operator $\mathfrak{L}_{s,p}$, which combines both local and nonlocal components, it is necessary to introduce an appropriate space of test functions. Throughout the remainder of this paper, we assume that $\Omega\subset\mathbb{R}^N$ is a bounded open domain and that $0<s<1<p<\infty$. The space $\mathbb{X}_0^{s,p}(\Omega)$ (see \cite{BMV24, LGG24, LG2026}) is given by
\begin{equation*}
     \mathbb{X}_0^{s,p}(\Omega) = \{ w \in W^{1,p}(\mathbb{R}^N): \ w =0 \text{ in } \mathbb{R}^N \setminus \Omega \},
\end{equation*}
endowed with the norm
\begin{equation}\label{nm}
    \|w\|_{\mathbb{X}_0^{s,p}(\Omega)}
    =\left(\int_{\Omega}| \nabla w |^p dx+\int_{\mathbb{R}^N}\int_{\mathbb{R}^N}\frac{| w(x)-w(y)|^p}{| x-y |^{N+ps}} dxdy\right)^\frac{1}{p}.
\end{equation}
The Sobolev space $\mathbb{X}_0^{s,p}(\Omega)$ is separable for all $1\leq p< \infty$ and reflexive for $1< p<\infty$. Observe that using \eqref{G-kernel}, the norm $\|.\|_{\mathbb{X}_0^{s,p}(\Omega)}$ in $\mathbb{X}_0^{s,p}(\Omega)$ is equivalent to the norm $\|.\|'_{\mathbb{X}_0^{s,p}(\Omega)}$ given by
    $$\|w\|'_{\mathbb{X}_0^{s,p}(\Omega)}
    =\left(\int_{\Omega}| \nabla w |^p dx+\int_{\mathbb{R}^N}\int_{\mathbb{R}^N}{| w(x)-w(y)|^p}{K_{s,p}(x,y)} dxdy\right)^\frac{1}{p}.$$
 Also, from \cite[Lemma 2.3]{GarainKinnunen}, the norm in \eqref{nm} is equivalent to the norm given by $\|\nabla u\|_{L^p(\Omega}$. Therefore, $\mathbb{X}_0^{s,p}(\Omega)$ coincides with the closure of $C_c^{\infty}(\Omega)$ under the norm \eqref{nm}. Now, we proceed to the definition of weak solutions to \eqref{G}.
\begin{definition}\label{G-WS}
    Let $\Omega \subset \mathbb{R}^N$ be a bounded domain with Lipschitz boundary $\partial \Omega$ and $0<s<1<p<\infty$. A {function {$u \in W^{1,p}_{\text{loc}}(\Omega)\cap L_{s,p}^{p-1}(\mathbb{R}^N)$}} is a weak supersolution (subsolution) to \eqref{G} in $\Omega$ if for every $v \in \mathbb{X}_0^{s,p}(\Omega)$ with $v \geq 0$, we have
    \begin{align}\label{G-WSE}
        \int_{\Omega}|\nabla u|^{p-2}\nabla u \cdot \nabla v dx+ \int_{\mathbb{R}^N}\int_{\mathbb{R}^N} h(u(x)-u(y))(v&(x)-v(y))K_{s,p}(x,y) dydx \nonumber\\
        &\geq (\leq) \int_{\Omega}f(x,u,\nabla u,D_s^p u)v dx.
    \end{align}
    A function $u$ is called a weak solution if it is both a weak subsolution and supersolution to \eqref{G}. Equivalently, $u$ is a weak solution to \eqref{G} if equality holds in \eqref{G-WSE} for every $v \in \mathbb{X}_0^{s,p}(\Omega)$.
\end{definition} 
Next, we discuss some notations that will be used throughout the rest of the work.
\begin{remark}
 For $U \subset\mathbb{R}^N$, we denote
    \begin{itemize}
    \item For $\epsilon>0$,
    $U_\epsilon=\{x\in U: \operatorname{dist}(x,\partial U)>\epsilon\}.$
        \item $A(U)=\left( \mathbb{R}^N\right)^2 \setminus \left( \mathbb{R}^N \setminus U\right)^2= \left( U \times \mathbb{R}^N \right) \cup \left( (\mathbb{R}^N \setminus U) \times U\right).$
        \item $h(t)=|t|^{p-2}t, \ t\in \mathbb{R}$. 
    \end{itemize}
    For {$v\in W^{1,p}_{\text{loc}}(\mathbb{R}^N)$ and $w \in \mathbb{X}_0^{s,p}(\Omega)$}, the following notation will be used in the rest of the paper. 
    \begin{align*}
        H_{s,p,\Omega}(v,w)&=\int_{\Omega}|\nabla v|^{p-2}\nabla v \cdot \nabla w dx+ \int_{\mathbb{R}^N}\int_{\mathbb{R}^N} h(v(x)-v(y))(w(x)-w(y))K_{s,p}(x,y) dydx.
    \end{align*}
\end{remark}
Next, we present the notion of infimal convolution, which plays a crucial role in establishing that bounded viscosity supersolutions of \eqref{G} are also weak supersolutions.
\begin{definition}\label{G-def1}
    Let $q=2$ when $p>\frac{2}{2-s}$ and $q>\max\{2,\frac{sp}{p-1}\}$ if $1<p \leq \frac{2}{2-s}$. For a function $w:\mathbb{R}^N \rightarrow \mathbb{R}$ and $\epsilon>0$, we define the infimal convolution of $w$, denoted by $w_{\epsilon}:\mathbb{R}^N \rightarrow \mathbb{R}$ as
    \begin{equation*}
        w_{\epsilon}(x)=\inf\limits_{y \in \mathbb{R}^N} \left( w(y)+\frac{|x-y|^q}{q \epsilon^{q-1}}\right).
    \end{equation*}
\end{definition}
We conclude this section by stating the following characterization of infimal convolutions from \cite[Lemma 3.1(i)]{BM21}, which will be frequently used in Section \ref{G-s4}.
\begin{lemma}\label{G-def1-R}
    For a bounded, lower semi-continuous function $w:\mathbb{R}^N \rightarrow \mathbb{R}$ and $\epsilon>0$, there exists $r(\epsilon)>0$ such that $r(\epsilon) \rightarrow 0$ as $\epsilon \rightarrow 0$ and
    \begin{equation*}
        w_{\epsilon}(x)=\inf\limits_{y \in B_{r(\epsilon)}(x)} \left( w(y)+\frac{|x-y|^q}{q \epsilon^{q-1}}\right), \ x\in \mathbb{R}^N. 
    \end{equation*}
\end{lemma}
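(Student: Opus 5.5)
The statement immediately above is Lemma \ref{G-def1-R}, the localization property of the infimal convolution for a bounded lower semicontinuous $w$. The plan is a direct comparison argument: points $y$ far from $x$ cannot compete in the infimum, because the penalty term becomes larger than the total oscillation of $w$.

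First I would record the trivial upper bound. Taking $y=x$ in Definition \ref{G-def1} gives $w_\epsilon(x)\le w(x)\le \sup_{\mathbb{R}^N} w$. Write $M:=\sup|w|<\infty$. Now take any $y$ with
\[
|x-y|\ge r(\epsilon):=\big(2Mq\,\epsilon^{q-1}\big)^{1/q}.
\]
(If $M=0$, replace $M$ by any positive constant.) For such $y$,
\[
w(y)+\frac{|x-y|^q}{q\epsilon^{q-1}} \ge -M+2M = M \ge w(x) \ge w_\epsilon(x).
\]
So points outside $B_{r(\epsilon)}(x)$ never give a value below $w_\epsilon(x)$. This already yields
\[
w_\epsilon(x)=\inf_{y\in \overline{B_{r(\epsilon)}(x)}}\Big(w(y)+\frac{|x-y|^q}{q\epsilon^{q-1}}\Big).
\]

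To pass to the open ball, I would enlarge the radius slightly, for instance by taking $r(\epsilon):=(3Mq\,\epsilon^{q-1})^{1/q}$. Then every $y$ with $|x-y|\ge r(\epsilon)$ gives a value at least $2M\ge w(x)$. On the other hand, $x$ itself lies in the open ball and gives the value $w(x)$. Hence the infimum over the complement of the open ball is at least the value attained at $x$, and restricting to the open ball does not change the infimum.

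Finally, since $q\ge 2$, we have $r(\epsilon)=C(M,q)\,\epsilon^{(q-1)/q}\to 0$ as $\epsilon\to0$. The radius is also uniform in $x$.

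There is no real obstacle here. The only care needed is the boundary of the ball; it is handled by choosing the constant $3M$ (or by the observation that $y=x$ lies in the open ball). Lower semicontinuity is not even needed for this statement; in the paper it matters only later, to ensure that the infimum is attained.
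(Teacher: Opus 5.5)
Your argument is correct and is the standard one behind the paper's appeal to \cite[Lemma 3.1(i)]{BM21}; the paper gives no proof of its own beyond that citation. The argument: with $r(\epsilon)=(2q\epsilon^{q-1}\sup|w|)^{1/q}$, every $y$ with $|x-y|\ge r(\epsilon)$ gives a value at least $\sup|w|\ge w(x)\ge w_\epsilon(x)$, and $y=x$ lies in the open ball, so the infimum localizes and $r(\epsilon)\to 0$. Your enlargement to $3M$ is therefore unnecessary but harmless, and you are right that lower semicontinuity plays no role in this particular statement.
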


\section{Weak solutions are Viscosity solutions}\label{G-s3}
\noindent This section is devoted to the proof of Theorem \ref{G-T1} and Theorem \ref{G-T3}. To proceed to prove both the theorems, one common hypothesis necessary is the comparison principle. Note that for the homogeneous case, this has been obtained by the authors in \cite[Lemma 4.2]{LG2026}. We first discuss a subcase of \eqref{G} with some conditions on $f$, for which the Definition \ref{G-CP DFN} holds.
\begin{lemma}\label{G-L1}
    Let $\Omega\subset\mathbb{R}^N$ and $1<p<\infty$. Also, let $f:=f(x,t)$ be non-increasing in $t$.  Assume that $u$ and $v$ are weak subsolution and supersolution respectively, to \eqref{G} such that $u\leq v$ a.e. in $\mathbb{R}^N\setminus \Omega$. Then, $u \leq v$ a.e. in $\Omega$.
\end{lemma}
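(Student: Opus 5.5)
We prove the comparison by testing the weak formulations with the positive part of the difference, $\phi=(u-v)_+$. Since $u\le v$ a.e. in $\mathbb{R}^N\setminus\Omega$, the function $\phi$ vanishes outside $\Omega$. With $u,v\in W^{1,p}_{\text{loc}}(\Omega)$, and granting enough regularity up to the boundary (the Lipschitz boundary is used here, or one works with $(u-v-\delta)_+$ and lets $\delta\to0$), we have $\phi\in\mathbb{X}_0^{s,p}(\Omega)$ and $\phi\ge0$. So $\phi$ is admissible in Definition \ref{G-WS}. Testing the subsolution inequality for $u$ and the supersolution inequality for $v$ with $\phi$, and subtracting, gives
\begin{equation*}
H_{s,p,\Omega}(u,\phi)-H_{s,p,\Omega}(v,\phi)\le \int_\Omega \big(f(x,u)-f(x,v)\big)\phi\,dx .
\end{equation*}
On the set $\{\phi>0\}$ we have $u>v$, and $f$ is non-increasing in $t$, so the integrand $(f(x,u)-f(x,v))\phi$ is $\le 0$ there and vanishes elsewhere. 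Hence the right-hand side is $\le0$.

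Next we show that each piece of the left-hand side is nonnegative. For the local part, $\nabla\phi=\nabla(u-v)\chi_{\{u>v\}}$, so
\begin{equation*}
\int_{\{u>v\}}\big(|\nabla u|^{p-2}\nabla u-|\nabla v|^{p-2}\nabla v\big)\cdot\nabla(u-v)\,dx\ge0
\end{equation*}
by strict monotonicity of $\xi\mapsto|\xi|^{p-2}\xi$. Equality holds only if $\nabla u=\nabla v$ a.e. on $\{u>v\}$, that is, only if $\nabla\phi=0$ a.e.

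For the nonlocal part, set $a=u(x)-u(y)$ and $b=v(x)-v(y)$. We must show that
\begin{equation*}
\big(h(a)-h(b)\big)\big(\phi(x)-\phi(y)\big)\ge0
\end{equation*}
pointwise; this is the step needing care. We split into cases.
\begin{enumerate}[(i)]
\item If $\phi(x)=\phi(y)=0$, the product is $0$.
\item If $\phi(x)>0$ and $\phi(y)>0$, then $\phi(x)-\phi(y)=a-b$, and the claim follows from monotonicity of $h$.
\item If $\phi(x)>0\ge u(y)-v(y)$, then $a-b=\phi(x)-(u(y)-v(y))\ge\phi(x)>0$. So $h(a)\ge h(b)$, and the product is $\ge0$ because $\phi(x)-\phi(y)=\phi(x)>0$.
\item The case $\phi(y)>0\ge u(x)-v(x)$ is symmetric.
\end{enumerate}
Multiplying by $K_{s,p}\ge0$ and integrating shows the nonlocal term is nonnegative.

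Combining the two steps, both nonnegative terms must vanish. In particular the local term gives $\nabla\phi=0$ a.e. in $\mathbb{R}^N$. Since $\phi\in\mathbb{X}_0^{s,p}(\Omega)$, the norm equivalence with $\|\nabla\cdot\|_{L^p(\Omega)}$ recalled after \eqref{nm} (from \cite{GarainKinnunen}) forces $\phi=0$, so $u\le v$ a.e. in $\Omega$. Alternatively, vanishing of the nonlocal term forces $\phi$ to be constant, hence zero since $\phi=0$ outside $\Omega$.

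The main obstacle is the admissibility of $\phi$: weak solutions lie only in $W^{1,p}_{\text{loc}}(\Omega)\cap L^{p-1}_{s,p}$, while membership in $\mathbb{X}_0^{s,p}(\Omega)$ requires global $W^{1,p}$ regularity and finiteness of the double integrals. We would handle this by assuming, as is implicit in the statement, that $u,v\in W^{1,p}(\mathbb{R}^N)$ near $\overline\Omega$. Otherwise one would use truncations $\min\{(u-v)_+,k\}$ and the boundary condition to place $\phi$ in the closure of $C_c^\infty(\Omega)$, with finiteness of the tail-type integrals $\int\!\!\int h(u(x)-u(y))\phi(x)K$ coming from $u,v\in L^{p-1}_{s,p}(\mathbb{R}^N)$.
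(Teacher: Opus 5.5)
Your proposal is correct and follows the paper's proof. You test both weak inequalities with $(u-v)_+$, subtract them, use that $f$ is non-increasing in $t$ to make the right-hand side nonpositive, and conclude from the monotonicity of the local and nonlocal parts. You write out the pointwise case analysis and the final Poincar\'e/constancy step, which the paper delegates to \cite[Lemma 4.2]{LG2026}, and you correctly obtain an inequality where the paper's display writes an equality. The paper also simply asserts that $(u-v)_+$ lies in $\mathbb{X}_0^{s,p}(\Omega)$, so your explicit assumption of $W^{1,p}$ regularity of $u,v$ near $\overline\Omega$ matches what the paper implicitly uses.
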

\begin{proof}
    Consider the test function $\psi=(u-v)_+ \in X_0^{s,p}(\Omega)$ with $\psi \geq 0$. Clearly, $u\geq v$ in $\operatorname{supp}\psi$. Thus, we have 
    $$H_{s,p,\Omega}(v,\psi)-H_{s,p,\Omega}(u,\psi)=\int_{\operatorname{supp}\psi}(f(x,v-f(x,u)\psi dx \geq 0.$$
    Then, following similar to the proof of \cite[Lemma 4.2]{LG2026}, we obtain $u \leq v$ a.e. in $\Omega$.
\end{proof}

Next, we show that continuous weak supersolutions to \eqref{G} are viscosity supersolutions to \eqref{G} for the case $p\geq 2$.

\medskip
\noindent {\it{\bf{Proof of Theorem} \ref{G-T1}.}}
   Let $u\in X_0^{s,p}(\Omega)$ be a continuous weak supersolution to \eqref{G}. We prove by the method of contradiction. Assume that $u$ is not a viscosity supersolution. Then, there exist $x_0\in \Omega, \ R>0$ with $B_R(x_0)\subset \Omega$ and a function $\psi\in C^2(B_R(x_0)\cap L_{s,p}^{p-1}(\mathbb{R}^N)$ with $\psi(x_0)=u(x_0)$ and $\psi \leq u$ such that
   \begin{equation*}
       \lim\limits_{\mu\rightarrow 0}\sup\limits_{x\in B_\mu(x_0)\setminus \{x_0\}}-\Delta_p \psi(x)+(-\Delta_p)^s \psi(x_0) < f(x_0,u(x_0),\nabla \psi(x_0), D_s^p \psi(x_0)).
   \end{equation*}
   Then, there exists $r\in (0,R)$ such that
   \begin{equation*}
       \sup\limits_{x\in B_{r}(x_0)\setminus \{x_0\}}-\Delta_p \psi(x)+(-\Delta_p)^s \psi(x_0) < f(x_0,u(x_0),\nabla \psi(x_0), D_s^p \psi(x_0)).
   \end{equation*}
   Observe that $\overline{B_r(x_0)}$ is a compact subset of $\mathbb{R}^N$. Thus, using the uniform continuity of the map $x \mapsto f(x,u(x), \nabla \psi(x), D_s^p\psi(x))$ in the compact set $\overline{B_r(x_0)}$ and from \cite[Lemma 3.8]{KKL19}, there exists $\rho>0$ such that 
   \begin{equation}\label{G-T1-3}
       \mathfrak{L}_{s,p}\psi(x) < f(x,u(x),\nabla \psi(x), D_s^p \psi(x))-\rho \text{ for all } x\in \overline{B_{r}(x_0)}.
   \end{equation}
   Since $p\geq 2$, {it is easy to see that for a function $v\in C_c^2(B_r(x_0))$, the convergence $\Delta_p (\psi+\alpha v) \rightarrow \Delta_p \psi$ as $\alpha \rightarrow 0$ is uniform in $B_{r}(x_0)$.} Thus, using \cite[Lemma 3.9]{KKL19}, we also get $r' \in (0,r), \alpha'>0$ and $\phi \in C_c^2(B_{\frac{r'}{2}}(x_0))$ with $\phi(x_0)=1, \ 0\leq \phi \leq 1$ such that
   \begin{equation}\label{G-T1-4}
   \left|\mathfrak{L}_{s,p}\psi(x)-\mathfrak{L}_{s,p}\psi_\alpha(x)\right|<\frac{\rho}{4},  \text{ for all }x \in B_{r'}(x_0), \alpha\in[0,\alpha'),
    \end{equation}
    where $\psi_\alpha=\psi+\alpha\phi$. Now, by the Lipschitz continuity of $f$ with respect to $\eta$ and $\zeta$, there exists $L>0$ such that
    \begin{align}\label{G-T1-1}
      |f(x,t,\eta_1,\zeta_1)-f(x,t,\eta_2,\zeta_2)|<L(|\eta_1-\eta_2|+|\zeta_1-\zeta_2|), \text{ for all } \eta_1,\eta_2\in \mathbb{R}^N, \ \zeta_1,\zeta_2 \in \mathbb{R}.
   \end{align}
   Choose $0<\alpha''<\alpha'$ sufficiently small such that
    \begin{equation}\label{G-T1-5}
        \sup\limits_{B_{2r'}(x_0)}\left(\alpha|\nabla\phi(x)|+|D_s^p \psi(x)-D_s^p \psi_\alpha(x)|\right)<\frac{\rho}{2L},
    \end{equation}
    for all $0\leq \alpha\leq\alpha''$. Combining \eqref{G-T1-3}--\eqref{G-T1-5}, we deduce
    \begin{align}\label{G-T1-6}
        \mathfrak{L}_{s,p}\psi_\alpha(x)&< \mathfrak{L}_{s,p}\psi(x)+\frac{\rho}{4}\nonumber\\
        &< f(x,u(x),\nabla \psi(x), D_s^p \psi(x))-\frac{3\rho}{4} \nonumber\\
        &\leq f(x, u(x),\nabla \psi_\alpha(x), D_s^p \psi_\alpha(x))-\frac{\rho}{4} \nonumber\\
        &<f(x, u(x),\nabla \psi_\alpha(x), D_s^p \psi_\alpha(x)), \text{ for } x\in \overline{B_{r'}(x_0)}, \  0\leq \alpha\leq\alpha''.
    \end{align}
    Multiplying \eqref{G-T1-6} by a function $v\in C_c^\infty(B_r(x_0))$ and integrating by parts, we deduce that $\psi_\alpha$ is a weak subsolution to the problem
    \begin{equation}\label{G'}
        \mathfrak{L}_{s,p} v(x)=\hat{f}(x,\nabla v(x), D_s^p v(x)), \  x\in B_{r'}(x_0),
    \end{equation}
    where $\hat{f}(x,\eta,\zeta):=f(x,u(x),\eta,\zeta)$. Clearly, $u$ is a weak supersolution to \eqref{G'} and $\psi_\alpha \leq u$ in $\mathbb{R}^N \setminus B_{r'}(x_0)$. From the continuity of $u$ and $\psi_\alpha$ and since the comparison principle holds, we deduce $ \psi_\alpha \leq u$ in $B_{r'}(x_0)$. However, this contradicts the fact that $\psi_\alpha(x_0)>u(x_0)$. Hence $u$ is a viscosity supersolution to \eqref{G}.
\hfill\qedsymbol{} \medskip

Note that the above proof does not hold for the case $1<p<2$, due to the lack of continuity of the map $x \mapsto \Delta_p \psi(x)$ when $\nabla \psi$ vanishes, even for $C^2$ functions $\psi$. For $1<p<2$, we establish that the weak supersolutions are viscosity supersolutions as follows.

\medskip
\noindent {\it{\bf{Proof of Theorem} \ref{G-T3}.}}
    Consider a continuous weak supersolution $u$ of \eqref{G} in $\Omega$. If possible, assume that $u$ is not a viscosity supersolution to \eqref{G}. Then, by Definition \ref{G-VS} and Remark \ref{G-VS-R}, there exist $B_r(x_0)\subset \Omega$ and $\psi\in C^2(B_r(x_0))\cap L_{s,p}^{p-1}(\mathbb{R}^N)$ with $\psi(x_0)=u(x_0)$, $\psi \leq u$, $\psi<u$ in $B_r(x_0)\setminus \{x_0\}$ and satisfying one of the conditions $(i)$ or $(ii)$ in Definition \ref{G-VS} such that
    \begin{equation*}
       \lim\limits_{\mu\rightarrow 0}\sup\limits_{x\in B_\mu(x_0)\setminus \{x_0\}}-\Delta_p \psi(x)+(-\Delta_p)^s \psi(x_0) < f(x_0,u(x_0),\nabla \psi(x_0), D_s^p \psi(x_0)).
   \end{equation*}
   Proceeding similar to the proof of Theorem \ref{G-T1}, we obtain $r'\in (0,\min\{\frac{r}{2},1\})$ such that $\nabla \psi (x) \neq 0$ for $x\in B_{r'}(x_0)\setminus \{x_0\}$ and
   \begin{equation}\label{G-T3-1}
       \mathfrak{L}_{s,p}\psi(x) < f(x,u(x),\nabla \psi(x), D_s^p \psi(x))-\rho \text{ for all } x\in \overline{B_{r'}(x_0)}\setminus \{x_0\}.
   \end{equation}
   Now, define the function $\phi \in C^2(B_r(x_0))$ by $\phi:=\psi-cw$, where $w\in C^\infty(\mathbb{R}^N)$ with $w\equiv 0$ in $B_{\frac{r'}{2}}(x_0)$, $w\equiv 1$ in $\mathbb{R}^N \setminus B_{r'}(x_0)$, $0\leq w \leq 1$ and the constant $c>0$ is to be chosen later. Note that $u-\phi \geq c$ in $\mathbb{R}^N\setminus B_{r'}(x_0)$. Thus, from the compactness of $\overline{B_{r'}(x_0)}\setminus B_{\frac{r'}{4}}(x_0)$ and the fact that $\phi\leq \psi <u$ in $B_{r'}(x_0)$, we have
   \begin{equation}\label{G-T3-2}
       \inf\limits_{\mathbb{R}^N \setminus B_{\frac{r'}{4}}(x_0)}(u-\phi)= \min\left\{\inf\limits_{\mathbb{R}^N\setminus B_{r'}(x_0)}(u-\phi),\inf\limits_{B_{r'}(x_0)\setminus B_{\frac{r'}{4}}(x_0)}(u-\phi)\right\}:=m>0.
   \end{equation}
   By the uniform continuity of $f:=f(x,t,\eta,\zeta)$ with respect to $\zeta$, there exists $\delta>0$ such that
   \begin{equation}\label{G-T3-10}
       |f(x,t,\eta,\zeta_1)-f(x,t,\eta,\zeta_2)|<\frac{\rho}{4} \text{ given }|\zeta_1-\zeta_2|<\delta.
   \end{equation}
   \textbf{Claim:} There exist $c_1,c_2>0$ such that
   \begin{align}
       |\mathfrak{L}_{s,p}\psi(x)-\mathfrak{L}_{s,p}\phi(x)|&<\frac{\rho}{4}, \ x\in B_{\frac{r'}{4}}(x_0), \ 0<c\leq c_1 \label{G-T3-3} \text{ and }\\
       |D_s^p\psi(x)-D_s^p\phi(x)|&<\delta, \ x\in B_{\frac{r'}{4}}(x_0), \ 0<c\leq c_2 \label{G-T3-4}.
   \end{align}
   We first prove \eqref{G-T3-3}. Let $x\in  B_{\frac{r'}{4}}(x_0)$ and $y\in \mathbb{R}^N \setminus B_{\frac{r'}{2}}(x_0)$. It is easy to see that there exists $C>0$ such that
   \begin{align}\label{G-T3-5}
       |x-y|\geq |y-x_0|-|x-x_0|\geq \frac{r'}{8}\left(\frac{8|y-x_0|}{r'}-2\right)\geq C(|y-x_0|+1)\geq C|y-x_0|.
   \end{align}
   We have the algebraic inequality
   \begin{equation}\label{G-T3-6}
       |h(t_1-t_2)-h(t_1)|\leq C|t_2|(|t_1|+|t_2|)^{p-2} \text{ for }a,b\in \mathbb{R}, \text{ and } 1<p<\infty.
   \end{equation}
   Therefore for $1<p<2$, taking $t_1=\psi(x)-\psi(y)$ and $t_2=cw(y)$ in \eqref{G-T3-6}, we get
   \begin{align}\label{G-T3-7}
       |h(\phi(x)-\phi(y))-h(\psi(x)-\psi(y))|&=|h(\psi(x)-\psi(y)-cw(y))-h(\psi(x)-\psi(y))| \nonumber\\
       &\leq Cc|w(y)|\left(|\psi(x)-\psi(y)|+c|w(y)|\right)^{p-2}\nonumber\\
       &\leq Cc^{p-1}.
   \end{align}
   Observe that the last step in \eqref{G-T3-7} is obtained using the conditions $1<p<2$ and $|w(y)|\leq 1$. We have $\phi \equiv \psi$ in $B_{\frac{r'}{2}}(x_0)$. Thus, applying \eqref{G-kernel}, \eqref{G-T3-5}, \eqref{G-T3-7} and using the fact that the constant function $1\in L_{s,p}^{p-1}(\mathbb{R}^N)$, we get
   \begin{align*}
       |\mathfrak{L}_{s,p}\psi(x)-\mathfrak{L}_{s,p}\phi(x)|&\leq \int_{\mathbb{R}^N \setminus B_{\frac{r'}{2}}(x_0)}\left|h(\phi(x)-\phi(y))-h(\psi(x)-\psi(y))\right|K_{s,p}(x,y)dy \\
       &\leq C \int_{\mathbb{R}^N \setminus B_{\frac{r'}{2}}(x_0)}\frac{\left|h(\phi(x)-\phi(y))-h(\psi(x)-\psi(y))\right|}{|y-x_0|^{N+sp}}dy\\
       &\leq Cc^{p-1}, \ x\in B_{\frac{r'}{4}}(x_0).
   \end{align*}
   Thus, we can choose $c_1>0$ sufficiently small such that \eqref{G-T3-3} holds. Next, we establish \eqref{G-T3-4}. Again, let $x\in x\in B_{\frac{r'}{4}}(x_0)$ and $y\in \mathbb{R}^N \setminus B_{\frac{r'}{2}}(x_0)$. We have the algebraic inequality
   \begin{equation}\label{G-T3-8}
       |t_1^p-t_2^p|\leq C |t_1-t_2|\max\{t_1,t_2\}^{p-1}\leq C|t_1-t_2|(|t_1|^{p-1}+|t_2|^{p-1}),\ a,b>0, \ 1<p<\infty.
   \end{equation}
   Taking $t_1=|\psi(x)-\psi(y)|$ and $t_2=|\psi(x)-\phi(y)|$ in \eqref{G-T3-8} and since $\psi$ is bounded in $B_r(x_0)$, we deduce
   \begin{align}\label{G-T3-9}
       ||\phi(x)-\phi(y)|^p-|\psi(x)-\psi(y)|^p|&\leq Cc|w(y)|\max\left\{|\phi(x)-\phi(y)|,|\psi(x)-\psi(y)|\right\}^{p-1}\nonumber\\
       &\leq Cc\left(|\phi(x)-\phi(y)|^{p-1}+|\psi(x)-\psi(y)|^{p-1}\right)\nonumber\\
       &\leq Cc\left(|\psi(x)|^{p-1}+|\psi(y)|^{p-1}+c^{p-1}|w(y)|^{p-1}\right)\nonumber\\
       &\leq Cc\left(1+|\psi(y)|^{p-1}+c^{p-1}|w(y)|^{p-1}\right).
   \end{align}
   Note that since $w$ is a bounded function, $w\in L_{s,p}^{p-1}(\mathbb{R}^N)$. Then, using \eqref{G-kernel}, \eqref{G-T3-5}, \eqref{G-T3-9} and the fact that $w, \psi \in L_{s,p}^{p-1}(\mathbb{R}^N)$, we deduce that
   \begin{align*}
       |D_s^p\psi(x)-D_s^p\phi(x)|&\leq \int_{\mathbb{R}^N \setminus B_{\frac{r'}{2}}(x_0)}\left|\phi(x)-\phi(y)|^p-|\psi(x)-\psi(y)|^p\right|K_{s,p}(x,y)dy \\
       &\leq Cc \int_{\mathbb{R}^N \setminus B_{\frac{r'}{2}}(x_0)} \frac{1+|\psi(y)|^{p-1}+c^{p-1}|w(y)|^{p-1}}{|y-x_0|}dy\nonumber\\
       &\leq Cc(1+c^{p-1}).
   \end{align*}
   Thus, it is possible to find $c_2>0$ such that \eqref{G-T3-4} holds. Now, we choose $c=\min\{c_1,c_2\}>0$. We have $ \phi(x)= \psi(x)$ for $x\in B_{\frac{r'}{2}}(x_0)$. Thus, \eqref{G-T3-1} and \eqref{G-T3-10}--\eqref{G-T3-4} gives
   \begin{align}\label{G-T3-11}
       \mathfrak{L}_{s,p}\phi(x)&<\mathfrak{L}_{s,p}\psi(x)+ \frac{\rho}{4}\nonumber\\
       &< f(x,\psi(x),\nabla \psi(x), D_s^p \psi(x))-\frac{3\rho}{4} \nonumber\\
       &<f(x,\phi(x),\nabla \phi(x), D_s^p \phi(x))-\frac{\rho}{2} \nonumber\\
       &<f(x,\phi(x),\nabla \phi(x), D_s^p \phi(x)), \ x\in B_{\frac{r'}{4}}(x_0)\setminus \{x_0\}.
   \end{align}
  Define $\phi_1:=\phi+m$. Clearly, $\phi_1\leq u$ in $\mathbb{R}^N \setminus  B_{\frac{r'}{4}}(x_0)$ by \eqref{G-T3-2}. From \eqref{G-T3-11} and the non-decreasing property of $f:=f(x,t,\eta,\zeta)$ with respect to $t$, we get
  \begin{align*}
      \mathfrak{L}_{s,p}\phi_1(x)&= \mathfrak{L}_{s,p}\phi(x)\\
      &<f(x,\phi(x),\nabla \phi(x), D_s^p \phi(x))\\
      &\leq f(x,\phi_1(x),\nabla \phi_1(x), D_s^p \phi_1(x)), \ x\in B_{\frac{r'}{4}}(x_0)\setminus \{x_0\}.
  \end{align*}
  Thus $\phi_1$ is a weak subsolution to \eqref{G} in $B_{\frac{r'}{4}}(x_0)$. Clearly $u$ is a weak supersolution to \eqref{G} in $B_{\frac{r'}{4}}(x_0)$. Then, by the comparison principle, we have $\phi_1 \leq u$ in $B_{\frac{r'}{4}}(x_0)$. This contradicts the fact that $\phi_1(x_0)=u(x_0)+m>u(x_0)$. Therefore, we deduce that $u$ is a viscosity supersolution to \eqref{G}. This completes the proof.
\hfill\qedsymbol{} \medskip

Before moving on to the next section, we have the following remark on Theorem \ref{G-T3}.
\begin{remark}\label{G-T3-R}
    When $1<p<2$, Theorem \ref{G-T3} can be applied to see that every continuous weak supersolution to \eqref{G} where $f:=0$ is a viscosity supersolution to \eqref{G}, as the comparison principle holds in this case. Theorem \ref{G-T3} can also be applied when $f:=f(x)$, since the comparison principle in this case is guarenteed by Lemma \ref{G-L1}.
\end{remark}

\section{Viscosity solutions are Weak solutions}\label{G-s4}
\noindent In this section, our aim is to prove that the viscosity solutions to \eqref{G} are weak solutions to the same. We begin by proving that the infimal convolutions of the viscosity supersolutions to \eqref{G} are viscosity supersolutions to a problem generated from \eqref{G}.
\begin{lemma}\label{G-L2}
    Assume that $f:=f(x,t,\eta, \zeta)$ is a continuous function, which is non-increasing in $t$ and $u$ is a viscosity supersolution to \eqref{G}. For each $\epsilon>0$ and $1<p<\infty$, the infimal convolution $u_\epsilon$ of $u$ is a viscosity supersolution to the problem
    \begin{equation}\label{G-L2-1}
        \mathfrak{L}_{s,p}v=f^\epsilon(x,v,\nabla v, D_s^p v)
    \end{equation}
    in $\Omega_\epsilon$, where the function $f^\epsilon$ is defined by
    \begin{equation}\label{G-L2-2}
       f^\epsilon(x,t,\eta,\zeta):=\inf\limits_{y\in B_{r(\epsilon)}(x)}f(y,t,\eta,\zeta),
    \end{equation}
    with $r(\epsilon)$ given by Lemma \ref{G-def1-R}.
\end{lemma}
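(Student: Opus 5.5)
The plan is the standard translation argument for infimal convolutions (as in \cite[Lemma 3.2]{BM21} and \cite{LG2026}), adapted to the extra dependence of $f$ on $(t,\eta,\zeta)$. First, $u_\epsilon$ is admissible. Since $u$ is bounded and lower semi-continuous (as in Lemma \ref{G-def1-R}), $u_\epsilon$ is bounded, so it lies in $L^{p-1}_{s,p}(\mathbb{R}^N)$. Moreover, $u_\epsilon$ is semiconcave, being an infimum of the functions $x\mapsto u(y)+\frac{|x-y|^q}{q\epsilon^{q-1}}$, whose second derivatives are locally bounded. In particular $u_\epsilon$ is continuous and satisfies conditions (a)--(b) of Definition \ref{G-VS}.

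Next comes the core step. Fix $x_0\in\Omega_\epsilon$, a ball $B_\mu(x_0)\subset\Omega_\epsilon$, and an admissible $\psi$ touching $u_\epsilon$ from below at $x_0$. By lower semi-continuity of $u$ and Lemma \ref{G-def1-R}, the infimum defining $u_\epsilon(x_0)$ is attained at some $y_0\in \overline{B_{r(\epsilon)}(x_0)}$; we may assume $r(\epsilon)<\epsilon$, so that $y_0\in\Omega$. Let $z_0=x_0-y_0$ and define the translate
\[
\tilde\psi(x):=\psi(x+z_0)-\frac{|z_0|^q}{q\epsilon^{q-1}}.
\]
From $u_\epsilon(x+z_0)\le u(x)+\frac{|z_0|^q}{q\epsilon^{q-1}}$ we get $\tilde\psi\le u$ in $\mathbb{R}^N$, with equality at $y_0$. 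Thus $\tilde\psi$ touches $u$ from below at $y_0$. It is $C^2$ on $B_\mu(y_0)$ and lies in $L^{p-1}_{s,p}(\mathbb{R}^N)$. Conditions (i)--(ii) of Definition \ref{G-VS} transfer under translation, since $N_{\tilde\psi}=N_\psi-z_0$ and $d_{\tilde\psi}(x)=d_\psi(x+z_0)$. We may shrink the ball so that $B_\mu(y_0)\subset\Omega$.

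Applying the viscosity supersolution property of $u$ at $y_0$ gives
\[
\lim_{r\to0}\sup_{B_r(y_0)\setminus\{y_0\}}-\Delta_p\tilde\psi(x)+(-\Delta_p)^s\tilde\psi(y_0)\ge f\bigl(y_0,\tilde\psi(y_0),\nabla\tilde\psi(y_0),D^p_s\tilde\psi(y_0)\bigr).
\]
The left-hand side should equal the corresponding quantity for $\psi$ at $x_0$. The local part is translation-invariant. For the nonlocal part, kernel property (ii) gives $(-\Delta_p)^s\tilde\psi(y_0)=(-\Delta_p)^s\psi(x_0)$ and $D_s^p\tilde\psi(y_0)=D^p_s\psi(x_0)$, and clearly $\nabla\tilde\psi(y_0)=\nabla\psi(x_0)$. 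For the $t$ slot, $\tilde\psi(y_0)=u(y_0)=u_\epsilon(x_0)-\frac{|z_0|^q}{q\epsilon^{q-1}}\le \psi(x_0)$. Since $f$ is non-increasing in $t$, this yields
\[
f\bigl(y_0,\tilde\psi(y_0),\cdot,\cdot\bigr)\ge f\bigl(y_0,\psi(x_0),\cdot,\cdot\bigr)\ge f^\epsilon\bigl(x_0,\psi(x_0),\nabla\psi(x_0),D_s^p\psi(x_0)\bigr),
\]
where the last inequality uses $y_0\in \overline{B_{r(\epsilon)}(x_0)}$ and the definition \eqref{G-L2-2}. If $y_0$ lies on the boundary of the ball, the infimum is taken over the closed ball, which is harmless by continuity of $f$. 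This is exactly the supersolution inequality for \eqref{G-L2-1} at $x_0$.

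I expect the main obstacle to be the bookkeeping that the test-function conditions survive the translation, particularly the $C^2_\beta$ condition for $1<p\le\frac{2}{2-s}$ and the requirement that the ball $B_\mu(y_0)$ stay inside $\Omega$; this is where the restriction to $\Omega_\epsilon$ and $r(\epsilon)\to0$ are used. A secondary point is the direction of monotonicity in $t$: the shift lowers the $t$-argument, and the non-increasing hypothesis is precisely what makes $f$ evaluated at the lowered value larger, so the inequality points the right way. I would verify this sign carefully first.
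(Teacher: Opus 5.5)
Your proposal is correct in substance and is the paper's translation argument. The paper first shows that every translate $w_b(x)=u(x+b)+\frac{|b|^q}{q\epsilon^{q-1}}$, $b\in B_{r(\epsilon)}(0)$, is a viscosity supersolution of \eqref{G-L2-1}, by pulling a test function back exactly as you do with $\tilde\psi$ (translation invariance of the kernel, monotonicity of $f$ in $t$, and the definition of $f^\epsilon$). It then passes to $u_\epsilon$ using $u_\epsilon\le w_b$ with equality at $x_0$ for the minimizing $b$; you merge these two steps into a single translation with $z_0=-b$.

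The one point to fix is your assumption $r(\epsilon)<\epsilon$. The radius in Lemma \ref{G-def1-R} is of order $(q\epsilon^{q-1}\operatorname{osc}u)^{1/q}$, which exceeds $\epsilon$ for small $\epsilon$, and minimizers can genuinely lie that far from $x_0$. To guarantee $y_0\in\Omega$ you should work on $\{x\in\Omega:\operatorname{dist}(x,\partial\Omega)>r(\epsilon)\}$ instead of $\Omega_\epsilon$. The paper's proof also passes over this point silently. The change is harmless for Theorem \ref{G-T2}, since only a fixed compact set and $r(\epsilon)\to0$ matter there.
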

\begin{proof}
    For each $\epsilon>0$ and $b\in B_{r(\epsilon)}(0)$, let us define the function $w_b$ by
    $$w_b(x)=u(x+b)+\frac{|b|^q}{q\epsilon^{q-1}}, \ x\in \mathbb{R}^N.$$
    Assume that $w_b$ is a viscosity solution to \eqref{G-L2-1} in $\Omega_\epsilon$. Observe that by \cite[Lemma 3.1(v)]{BM21}, for any $x_0\in \Omega_\epsilon$ and $r'>0$ with $B_{r'}(x_0)\subset \Omega_\epsilon$, there exists $b\in B_{r(\epsilon)}(0)$ such that $u_\epsilon(x_0)=w_b(x_0)$. Clearly, $u_\epsilon \leq w_b$ by the definition of $w_b$. Consider any $\psi\in C^2(B_{r'}(x_0))\cap L_{s,p}^{p-1}(\mathbb{R}^N)$ with $\psi\leq u_\epsilon, \ \psi(x_0)=u_\epsilon(x_0)$ and such that if $p<2$ and $\nabla \psi(x_0)=0$, then $x_0$ is an isolated point of $\psi$. Furthermore, there exists $\beta>\frac{sp}{p-1}$ such that $\psi\in C_\beta^2(B_r(x_0))$ if $p \leq \frac{2}{2-s}$. Then, $\psi  \leq w_b$. Since $w_b$ is a viscosity supersolution, we obtain
    $$\lim\limits_{r\rightarrow 0}\sup\limits_{x\in B_r(x_0)\setminus \{x_0\}}-\Delta_p \psi(x)+(-\Delta_p)^s \psi(x_0) \geq f_\epsilon(x_0),\psi(x_0),\nabla \psi(x_0), D_s^p \psi(x_0)).$$
    Thus, $u_\epsilon$ is a viscosity supersolution to \eqref{G-L2-1} in $\Omega_\epsilon$. Hence, it suffices to show that for every $b\in B_{r(\epsilon)}(0)$, $w_b$ is a viscosity supersolution to \eqref{G-L2-1} in $\Omega_\epsilon$. Again, consider any point $x_0\in \Omega_\epsilon$ and $r'>0$ with $B_{r'}(x_0)\subset \Omega_\epsilon$. Let $\psi\in C^2(B_{r'}(x_0))\cap L_{s,p}^{p-1}(\mathbb{R}^N)$ with $\psi\leq w_b, \ \psi(x_0)=w_b(x_0)$ and such that if $\nabla \psi$ has a zero at $x_0$, then $x_0$ is an isolated zero for $p<2$ and there exists $\beta>\frac{sp}{p-1}$ such that $\psi \in C_\beta^2(B_r(x_0))$ if $p\leq \frac{2}{2-s}$. Let us define $\phi$ by 
    $$\phi(y)=\psi(y-b)-\frac{|b|^q}{q\epsilon^{q-1}}, \ y\in \mathbb{R}^N.$$
     Then, $\phi\in C^2(B_{r'}(x_0+b))$. Also, note that $u(y)=w_b(y-b)-\frac{|b|^q}{q\epsilon^{q-1}}$ for all $y\in \mathbb{R}^N$. Hence, $\phi \leq u$ and $\phi(x_0+b)=u(x_0+b)$. Since $u$ is a viscosity supersolution to \eqref{G} in $\Omega$, we get
     \begin{align*}
         \lim\limits_{r\rightarrow 0}&\sup\limits_{x\in B_r(x_0)\setminus \{x_0\}}-\Delta_p \psi(x)+(-\Delta_p)^s \psi(x_0)\\
         &=\lim\limits_{r\rightarrow 0}\sup\limits_{x\in B_r(x_0)\setminus \{x_0\}}-\Delta_p \phi(x+b)+(-\Delta_p)^s \phi(x_0+b)\\
         &\geq f(x_0+b,\phi(x_0+b), \nabla \phi(x_0+b), D_s^p \phi(x_0+b))\\
         &=f\left(x_0+b,\psi(x_0)-\frac{|b|^q}{q\epsilon^{q-1}}, \nabla \psi(x_0), D_s^p \psi(x_0)\right)\\
         &\geq f^\epsilon(x_0,\psi(x_0), \nabla \psi(x_0), D_s^p \psi(x_0)),
     \end{align*}
     where the last step is obtained using the non-increasing property of $f:=f(x,t,\eta,\zeta)$ with respect to $t$ and the definition of $f^\epsilon$. Thus, $w_b$ is a viscosity supersolution to \eqref{G-L2-1} in $\Omega_\epsilon$. This completes the proof.
\end{proof}
Next, we show that $u_\epsilon$ satisfies \eqref{G-L2-1} in the weak sense for $p \geq 2$. The proof for the case $1<p<2$ has not been obtained due to the difficulty in choosing a suitable test function for viscosity solutions in this range. 
\begin{lemma}\label{G-L3}
    Let $2 \leq p <\infty$ and $u$ be a viscosity supersolution to \eqref{G}. Then, for any $v\in C_c^\infty(\Omega_\epsilon)$ with $v \geq 0$,
    we have
    $$H_{s,p,\Omega_\epsilon}(u_\epsilon,v)\geq \int_{\Omega_\epsilon}f^\epsilon(x,u_\epsilon, \nabla u_\epsilon, D_s^p u_\epsilon )vdx,$$
    where $f^\epsilon$ is given by \eqref{G-L2-2}.
\end{lemma}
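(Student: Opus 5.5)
The plan follows the standard route of Julin--Juutinen and Barrios--Medina, adapted to the mixed operator. Everything rests on the semiconcavity of the infimal convolution. Since $q=2$ when $p\geq 2>\frac{2}{2-s}$, the function $u_\epsilon(x)-\frac{|x|^2}{2\epsilon}$ is concave; this follows from Definition \ref{G-def1}, because $u_\epsilon-\frac{|x|^2}{2\epsilon}$ is an infimum of affine functions of $x$. Consequently $u_\epsilon$ is locally Lipschitz, twice differentiable a.e. by Alexandrov's theorem, and its distributional Hessian satisfies $D^2u_\epsilon\leq \frac{1}{\epsilon}I$. Moreover $u_\epsilon$ is bounded (since $u$ is), so $u_\epsilon\in W^{1,\infty}_{\text{loc}}\cap L^{p-1}_{s,p}(\mathbb{R}^N)$, and $H_{s,p,\Omega_\epsilon}(u_\epsilon,v)$ and $D^p_s u_\epsilon$ are well defined.

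\textbf{Step 1 (pointwise inequality a.e.).} At every point $x_0\in\Omega_\epsilon$ where $u_\epsilon$ is twice differentiable, I will show
$$-\Delta_p u_\epsilon(x_0)+(-\Delta_p)^s u_\epsilon(x_0)\geq f^\epsilon(x_0,u_\epsilon(x_0),\nabla u_\epsilon(x_0),D^p_s u_\epsilon(x_0)),$$
with $\Delta_p$ computed from the Alexandrov derivatives. To do this I touch $u_\epsilon$ from below at $x_0$ by the paraboloid $\psi_\delta(x)=u_\epsilon(x_0)+\nabla u_\epsilon(x_0)\cdot(x-x_0)+\frac12\langle (D^2u_\epsilon(x_0)-\delta I)(x-x_0),x-x_0\rangle$ in a small ball $B_\rho(x_0)$, and set $\psi_\delta=u_\epsilon$ outside that ball. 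Here $u_\epsilon$ is lower semicontinuous and $\psi_\delta$ is $C^2$ only near $x_0$, which is all Definition \ref{G-VS} requires. Since $p\geq 2$, no condition (i) or (ii) is needed, and $-\Delta_p\psi_\delta$ is continuous. Lemma \ref{G-L2} then gives the inequality for $\psi_\delta$. I let $\delta\to0$ and $\rho\to0$; because $\psi_\delta\to u_\epsilon$ in the tail and near $x_0$, dominated convergence passes the nonlocal terms $(-\Delta_p)^s\psi_\delta(x_0)$ and $D_s^p\psi_\delta(x_0)$ to the limits for $u_\epsilon$. For the singular part near $x_0$, I use $|u_\epsilon(y)-\psi_\delta(y)|=o(|y-x_0|^2)$ together with the fact that $u_\epsilon$ is $C^{1,1}$ from one side. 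The continuity of $f$ (hence lower semicontinuity of $f^\epsilon$ in $(t,\eta,\zeta)$, obtained after slightly shrinking $r(\epsilon)$ or working with the closed ball) handles the right-hand side.

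\textbf{Step 2 (integration against $v$).} Next I pass from the pointwise inequality to the weak one. I regularise $u_\epsilon$ by mollification, $u_{\epsilon,j}=u_\epsilon*\eta_j$. These functions remain semiconcave with the same constant, so $D^2 u_{\epsilon,j}\leq \frac1\epsilon I$, $\nabla u_{\epsilon,j}\to\nabla u_\epsilon$ a.e., and the gradients are locally uniformly bounded. For smooth functions, integration by parts gives $\int|\nabla u_{\epsilon,j}|^{p-2}\nabla u_{\epsilon,j}\cdot\nabla v=\int(-\Delta_p u_{\epsilon,j})v$, and symmetrising the double integral using the symmetry of $K_{s,p}$ handles the nonlocal part.

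\textbf{Main obstacle and how I will handle it.} The key difficulty is that $-\Delta_p u_{\epsilon,j}$ does not converge to the Alexandrov version in any strong sense; $D^2u_\epsilon$ may have a singular (negative) part. I will handle this via Fatou's lemma, using the one-sided bound: for $p\geq 2$,
$$-\Delta_p u_{\epsilon,j}\geq -C(p)\,|\nabla u_{\epsilon,j}|^{p-2}\tfrac{1}{\epsilon}\geq -C,$$
which is uniformly bounded below on $\operatorname{supp}v$. The aim is to show
$$H(u_\epsilon,v)=\lim_j\int(-\Delta_p u_{\epsilon,j})v+\text{nonlocal}\geq\int\liminf_j(\cdot)\,v\geq \int\big(-\Delta_p u_\epsilon+(-\Delta_p)^s u_\epsilon\big)v.$$
The first equality uses the locally uniform convergence of the gradients (a.e. with uniform bounds) on the left. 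The final inequality needs $\liminf_j(-\Delta_p u_{\epsilon,j})\geq -\Delta_p u_\epsilon$ a.e., which I expect to be the delicate point. I would try first to argue as in \cite{JJ12,BM21}: the absolutely continuous part of $D^2(u_\epsilon*\eta_j)$ converges a.e. to the Alexandrov Hessian, while the singular part is negative semidefinite and so only increases $-\Delta_p$. Finally, I will combine this with Step 1. For the nonlocal term, I use dominated convergence, which is justified because $u_\epsilon$ is locally $C^{1,1}$ from one side (giving integrability near the diagonal for $p\geq2$) and bounded (giving the tail). This yields the claimed inequality with $f^\epsilon(x,u_\epsilon,\nabla u_\epsilon,D_s^pu_\epsilon)$ on the right.
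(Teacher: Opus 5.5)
Your proposal is correct and follows the paper's route. The paper first gets the a.e. pointwise inequality $\mathfrak{L}_{s,p}u_\epsilon\geq f^\epsilon(x,u_\epsilon,\nabla u_\epsilon,D_s^pu_\epsilon)$ from Lemma \ref{G-L2} at points where $u_\epsilon$ is twice differentiable. It then proves $H_{s,p,\Omega_\epsilon}(u_\epsilon,v)\geq\int_{\Omega_\epsilon}\mathfrak{L}_{s,p}u_\epsilon\,v\,dx$ from semiconcavity, mollification and Fatou, which it simply cites from \cite{LG2026}. Your glued-paraboloid test function at Alexandrov points is in fact a sounder justification of the first step than the paper's appeal to Lusin's theorem, which does not give $u_\epsilon\in C^2$ on a ball.

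One small correction concerns the nonlocal term in Step 2. Semiconcavity only gives a one-sided bound: the symmetrized integrand is $\geq -C|z|^{p-N-sp}$ near the diagonal. So you should pass to the limit with Fatou's lemma, exactly as for the local term, rather than with dominated convergence; Fatou gives precisely the inequality you need.
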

\begin{proof}
    Using \cite[Lemma 3.1(iii)]{BM21} and Lusin's theorem, we deduce that for almost every $x_0\in \Omega_\epsilon$, there exists $r>0$ such that $u_\epsilon\in C^2(B_r(x_0))$.  Observe that $u_\epsilon$ is a viscosity supersolution to \eqref{G-L2-1} by Lemma \ref{G-L2}. Therefore, we get $\mathfrak{L}_{s,p}u_\epsilon\geq f^\epsilon(x,u_\epsilon(x), \nabla u_\epsilon(x), D_s^p u_\epsilon(x))$ a.e. in $\Omega_\epsilon$. Then, for every $v\in C_c^\infty(\Omega_\epsilon)$ with $v \geq 0$, we get
    \begin{equation}\label{G-L3-1}
        \int_{\Omega_\epsilon}\mathfrak{L}_{s,p}u_\epsilon v dx \geq \int_{\Omega_\epsilon}f^\epsilon(x,u_\epsilon, \nabla u_\epsilon, D_s^p u_\epsilon) v dx.
    \end{equation}
    Following the proof of (3.16) and (3.21) in \cite[Lemma 3.3]{LG2026}, we deduce
    \begin{equation}\label{G-L3-2}
        H_{s,p,\Omega_\epsilon}(u_\epsilon,v)\geq \int_{\Omega_\epsilon}\mathfrak{L}_{s,p}u_\epsilon v dx.
    \end{equation}
    Combining \eqref{G-L3-1} and \eqref{G-L3-2}, we get the desired result.
\end{proof}
Now, we establish a limiting property of the RHS in \eqref{G-L3-1} with respect to the function $u$.  
  \begin{lemma}\label{G-L4}
      Let $1<p<\infty$, $u\in X_0^{s,p}(\Omega)\cap L_{s,p}^{p-1}(\mathbb{R}^N)$ and $f:=f(x,t,\eta,\zeta)$ be a uniformly continuous function, which is also Lipschitz continuous in $\eta$ and $\zeta$. Additionally, assume that $f$ satisfies \eqref{G-f condn} and $f^\epsilon$ is given by \eqref{G-L2-2}. Then, for any $v\in C_c^\infty(\Omega)$ with $K:=\operatorname{supp}(v)$, $v\geq 0$ and
      \begin{equation}\label{G-L4-1}
          \lim\limits_{\epsilon\rightarrow 0}\left(\int_K (|\nabla u_\epsilon-\nabla u|^p)dx+\int_K\int_{\mathbb{R}^N}|u_\epsilon(x)-u_\epsilon(y)-u(x)+u(y)|^pK_{s,p}(x,y)dydx\right)=0,
      \end{equation}
      the following limit holds.
      \begin{equation*}
          \lim\limits_{\epsilon\rightarrow 0}\int_K f^\epsilon(x,u_\epsilon,\nabla u_\epsilon, D_s^p u_\epsilon)vdx=\int_K f(x,u,\nabla u, D_s^p u)vdx.
      \end{equation*}
  \end{lemma}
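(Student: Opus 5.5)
Split the difference into a term measuring the change $f\to f^\epsilon$ and a term measuring the change of arguments:
\begin{align*}
\int_K \bigl(f^\epsilon(x,u_\epsilon,\nabla u_\epsilon,D_s^pu_\epsilon)-f(x,u,\nabla u,D_s^pu)\bigr)v\,dx
&=\int_K \bigl(f^\epsilon-f\bigr)(x,u_\epsilon,\nabla u_\epsilon,D_s^pu_\epsilon)\,v\,dx\\
&\quad+\int_K \bigl(f(x,u_\epsilon,\nabla u_\epsilon,D_s^pu_\epsilon)-f(x,u,\nabla u,D_s^pu)\bigr)v\,dx=:I_\epsilon+II_\epsilon,
\end{align*}
and show that each term tends to $0$.

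\textbf{The term $I_\epsilon$.} Use the uniform continuity of $f$ in $x$, uniformly in the remaining variables. Let $\omega$ be a modulus of continuity of $f$. Then $|f^\epsilon(x,t,\eta,\zeta)-f(x,t,\eta,\zeta)|\le\sup_{y\in B_{r(\epsilon)}(x)}|f(y,t,\eta,\zeta)-f(x,t,\eta,\zeta)|\le\omega(r(\epsilon))$. Hence $|I_\epsilon|\le\omega(r(\epsilon))\|v\|_{L^1(K)}\to0$, since $r(\epsilon)\to0$ by Lemma \ref{G-def1-R}. This step needs no integrability of the arguments.

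\textbf{The term $II_\epsilon$.} Split it further as
\[
|f(x,u_\epsilon,\nabla u_\epsilon,D_s^pu_\epsilon)-f(x,u_\epsilon,\nabla u,D_s^pu)|+|f(x,u_\epsilon,\nabla u,D_s^pu)-f(x,u,\nabla u,D_s^pu)|.
\]
\begin{itemize}
\item \emph{First piece.} The Lipschitz continuity in $(\eta,\zeta)$ bounds it by $L(|\nabla u_\epsilon-\nabla u|+|D_s^pu_\epsilon-D_s^pu|)$. By H\"older and \eqref{G-L4-1}, $\int_K|\nabla u_\epsilon-\nabla u|v\to0$.
\item \emph{Nonlocal gradients.} Set $a=u_\epsilon(x)-u_\epsilon(y)$ and $b=u(x)-u(y)$, and use \eqref{G-T3-8}: $\bigl||a|^p-|b|^p\bigr|\le C|a-b|(|a|^{p-1}+|b|^{p-1})$. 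Integrating against $K_{s,p}$ and applying H\"older in $(x,y)$ with exponents $p$ and $p/(p-1)$ gives
\[
\int_K|D_s^pu_\epsilon-D_s^pu|\,dx\le C\Bigl(\int_K\!\int_{\mathbb{R}^N}|a-b|^pK_{s,p}\Bigr)^{1/p}\Bigl(\int_K\!\int_{\mathbb{R}^N}(|a|^p+|b|^p)K_{s,p}\Bigr)^{(p-1)/p}.
\]
The first factor tends to $0$ by \eqref{G-L4-1}. The second factor is bounded because $\int_K\int_{\mathbb{R}^N}|b|^pK_{s,p}<\infty$ for $u\in\mathbb{X}_0^{s,p}(\Omega)$, and the $a$-part then stays bounded by \eqref{G-L4-1} and the triangle inequality.
\item \emph{Second piece.} Use the uniform continuity of $f$ in $t$ together with $u_\epsilon\to u$ pointwise: $u_\epsilon\uparrow u$ for lower semicontinuous $u$, by \cite[Lemma 3.1]{BM21}. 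This piece then tends to $0$ pointwise, and the limit passes inside the integral by dominated convergence.
\end{itemize}

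\textbf{The main obstacle: a dominating function.} This is needed both for the dominated convergence above and to make all integrals finite. Use \eqref{G-f condn}. Since $u$ is bounded on $K$ (as in Theorem \ref{G-T2}) and $\inf u\le u_\epsilon\le u$, the values $u_\epsilon$ lie in a fixed compact interval, so $g_1(|u_\epsilon|)$ and $g_2(|u_\epsilon|)$ are uniformly bounded. This gives
\[
|f(x,u_\epsilon,\nabla u,D_s^pu)|\le C\bigl(|\nabla u|^{p-1}+(D_s^pu)^{(p-1)/p}+g_3\bigr),
\]
which lies in $L^1(K)$ because $\nabla u\in L^p$, $D_s^pu\in L^1(K)$ and $g_3\in L^\infty_{\text{loc}}$. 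If boundedness of $u$ is not available from the hypotheses, I would instead pass to a subsequence along which $\nabla u_\epsilon\to\nabla u$ and $D_s^pu_\epsilon\to D_s^pu$ a.e. with $L^p$ and $L^1$ dominating functions (partial converse of dominated convergence). I would then apply the generalized dominated convergence theorem to the whole integrand at once. Uniqueness of the limit upgrades the subsequence to the full family.
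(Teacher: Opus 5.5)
Your proposal is correct and follows essentially the paper's route. You use the same splitting into the $f^\epsilon-f$ term (uniform continuity of $f$ in $x$ with $r(\epsilon)\to0$), the $(\eta,\zeta)$ terms (Lipschitz continuity, H\"older, the bound for $\bigl||a|^p-|b|^p\bigr|$, and \eqref{G-L4-1}), and the $t$-term (pointwise $u_\epsilon\to u$ plus dominated convergence via \eqref{G-f condn} and boundedness of $u$), and your dominating function, which involves only $\nabla u$ and $D_s^pu$, is leaner than the paper's $h_1,h_2$.

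Boundedness of $u$ is part of the setting: the paper uses it too, through $\|u\|_{L^\infty(\Omega)}$ in $l_1$ and through Lemma \ref{G-def1-R}, which is what produces $r(\epsilon)$ and hence $f^\epsilon$. So your fallback for unbounded $u$ is unnecessary, and as sketched it is not justified, since $g_1(|u_\epsilon|)$ and $g_2(|u_\epsilon|)$ would no longer be uniformly controlled.
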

  \begin{proof}
      Observe that
      \begin{align}\label{G-L4-2}
          \Bigg|\int_K \big(f^\epsilon(x,u_\epsilon,\nabla u_\epsilon&, D_s^p u_\epsilon)-f(x,u,\nabla u, D_s^p u)\big)vdx\Bigg|\nonumber\\
          &\leq \int_K |f^\epsilon(x,u_\epsilon,\nabla u_\epsilon, D_s^p u_\epsilon)-f(x,u_\epsilon,\nabla u_\epsilon, D_s^p u_\epsilon)|vdx \nonumber\\
          &\quad +\int_K |f(x,u_\epsilon,\nabla u_\epsilon, D_s^p u_\epsilon)-f(x,u_\epsilon,\nabla u_\epsilon, D_s^p u)|vdx \nonumber\\
          &\quad +\int_K |f(x,u_\epsilon,\nabla u_\epsilon, D_s^p u)-f(x,u_\epsilon,\nabla u, D_s^p u)|vdx \nonumber\\
          &\quad +\int_K |f(x,u_\epsilon,\nabla u, D_s^p u)-f(x,u,\nabla u, D_s^p u)|vdx.
      \end{align}
      Clearly, $v$ is bounded in $\mathbb{R}^N$. Now, from the Lipschitz continuity of $f$ with respect to $\eta$, $\zeta$ and using the H\"older inequality, we get
      \begin{align}\label{G-L4-3}
          \int_K |f(x,u_\epsilon&,\nabla u_\epsilon, D_s^p u_\epsilon)-f(x,u_\epsilon,\nabla u_\epsilon, D_s^p u)|vdx \nonumber\\
          \quad &+\int_K |f(x,u_\epsilon,\nabla u_\epsilon, D_s^p u)-f(x,u_\epsilon,\nabla u, D_s^p u)|vdx \nonumber\\
          &\leq C\int_K | D_s^p u_\epsilon-D_s^p u|vdx +C\int_K |\nabla u_\epsilon-\nabla u|vdx \nonumber\\
          &\leq C\int_K\int_{\mathbb{R}^N}(|u_\epsilon(x)-u_\epsilon(y)|^p-|u(x)-u(y)|^p)K_{s,p}(x,y)dydx\nonumber\\
          & \quad +C\left(\int_K|\nabla u_\epsilon-\nabla u|^pdx\right)^\frac{1}{p}\left(\int_K|v|^\frac{p}{p-1}\right)^\frac{p-1}{p}.
      \end{align}
      Also, we have
      \begin{equation}\label{G-L4-4}
          ||t_1|^p-|t_2|^p|\leq C(|t_1-t_2|+|t_2|)^{p-1}|t_1-t_2|, ~t_1,t_2\in \mathbb{R}.
      \end{equation}
      Using \eqref{G-L4-4} with $t_1=u_\epsilon(x)-u_\epsilon(y)$ and $t_2=u(x)-u(y)$ and applying the H\"older inequality, we obtain
      \begin{align}\label{G-L4-5}
          \int_K&\int_{\mathbb{R}^N}(|u_\epsilon(x)-u_\epsilon(y)|^p-|u(x)-u(y)|^p)K_{s,p}(x,y)dydx\nonumber\\
          &\leq C\int_K\int_{\mathbb{R}^N}\left(|u_\epsilon(x)-u_\epsilon(y)-u(x)+u(y)|+|u(x)-u(y)|\right)^{p-1}\nonumber\\
          &\quad \times |u_\epsilon(x)-u_\epsilon(y)-u(x)+u(y)|K_{s,p}(x,y)dydx\nonumber\\
          &\leq C\left(\int_K\int_{\mathbb{R}^N}\left(|u_\epsilon(x)-u_\epsilon(y)-u(x)+u(y)|^p+|u(x)-u(y)|^p\right)K_{s,p}(x,y)dydx\right)^\frac{p-1}{p}\nonumber\\
          &\quad \times \left(\int_K\int_{\mathbb{R}^N}\left(|u_\epsilon(x)-u_\epsilon(y)-u(x)+u(y)|\right)^pK_{s,p}(x,y)dydx\right)^\frac{1}{p}.
      \end{align}
      Note that $u\in W^{s,p}(\Omega)$ and $v\in L^\frac{p}{p-1}(\Omega)$. Thus, using \eqref{G-L4-3}, \eqref{G-L4-5} and finally applying \eqref{G-L4-1}, we deduce
      \begin{align}\label{G-L4-6}
          \int_K |f(x,u_\epsilon&,\nabla u_\epsilon, D_s^p u_\epsilon)-f(x,u_\epsilon,\nabla u_\epsilon, D_s^p u)|vdx \nonumber\\
          \quad &+\int_K |f(x,u_\epsilon,\nabla u_\epsilon, D_s^p u)-f(x,u_\epsilon,\nabla u, D_s^p u)|vdx
          \rightarrow 0 \text{ as }\epsilon\rightarrow 0.
      \end{align}
      From the uniform continuity of $f$, for any $\alpha>0$, we get $\rho>0$ such that
      \begin{equation}\label{G-L4-7}
          |f(x,u_\epsilon(x),\nabla u_\epsilon(x), D_s^p u_\epsilon(x))-f(y,u_\epsilon(x),\nabla u_\epsilon(x), D_s^p u_\epsilon(x))|\leq \alpha, \text{ if } |x-y|<\rho.
      \end{equation}
      From the definition of $f^\epsilon$, and \eqref{G-L4-7}, we get
      \begin{equation}\label{G-L4-8}
          |f^\epsilon(x,u_\epsilon(x),\nabla u_\epsilon(x), D_s^p u_\epsilon(x))-f(x,u_\epsilon(x),\nabla u_\epsilon(x), D_s^p u_\epsilon(x))|\leq \alpha, \text{ if } r_\epsilon<\rho,x\in \Omega_\epsilon.
      \end{equation}
      Choose $\epsilon>0$ sufficiently small such that $r_\epsilon<\rho$. Then, \eqref{G-L4-8} and the boundedness of $v$ in $\mathbb{R}^N$ give
      \begin{align*}
          \int_K |f^\epsilon(x,u_\epsilon(x),\nabla u_\epsilon(x), D_s^p u_\epsilon(x))-f(x,u_\epsilon(x),\nabla u_\epsilon(x), D_s^p u_\epsilon(x))|vdx\leq C|K|\alpha.
      \end{align*}
      Since $\alpha>0$ is arbitrary and $r_\epsilon\rightarrow 0$ as $\epsilon\rightarrow 0$, we get
      \begin{align}\label{G-L4-10}
          \int_K |f^\epsilon(x,u_\epsilon(x),\nabla u_\epsilon(x), D_s^p u_\epsilon(x))-f(x,u_\epsilon(x),\nabla u_\epsilon(x), D_s^p u_\epsilon(x))|vdx\rightarrow 0 \text { as }\epsilon\rightarrow 0.
      \end{align}
      Finally, we have
      \begin{align}\label{G-L4-11}
          |f(x,u_\epsilon(x),\nabla u(x), D_s^p u(x))-f(x,u(x),\nabla u(x), D_s^p u(x))|v(x) \rightarrow 0
      \end{align}
      pointwise in $K$. By \eqref{G-L4-1} and \cite[Theorem 4.9]{B11}, there exist functions $h_1,h_2 \in L^p(K)$ such that $|\nabla u_\epsilon|\leq h_1$ and $|D_s^p u_\epsilon|^\frac{1}{p}\leq h_2$ a.e. in $K$. Also, by \eqref{G-f condn}, we have
      \begin{align}\label{G-L4-12}
          |f(x,&u_\epsilon(x),\nabla u(x), D_s^p u(x))-f(x,u(x),\nabla u(x), D_s^p u(x))|v\nonumber\\
          &\leq l_1\left(h_1(x)^{p-1}+h_2(x)^{p-1}+|\nabla u(x)|^{p-1}+|D_s^p u(x)|^{\frac{p-1}{p}}+\|g_3\|_{L^\infty(K)}\right)v\in L^1(K),
      \end{align}
      where $l_1\in [0,\infty)$ is given by
     \begin{align}\label{G-L4-13}
         l_1&=\sup\left\{|g_1(t)|+|g_2(t)|:-\|u\|_{L^\infty(\Omega)}\leq t \leq \|u\|_{L^\infty(\Omega)}\right\}\nonumber\\
         &\geq \sup\left\{|g_1(t)|+|g_2(t)|:-\|u_\epsilon\|_{L^\infty(\Omega)}\leq t \leq \|u_\epsilon\|_{L^\infty(\Omega)}\right\}.
     \end{align}
     The last inequality in \eqref{G-L4-13} is obtained by \cite[Remark 3.2]{BM21}. Using \eqref{G-L4-11}, \eqref{G-L4-12} and the dominated convergence theorem, we get
     \begin{equation}\label{G-L4-14}
         \int_K |f(x,u_\epsilon,\nabla u, D_s^p u)-f(x,u,\nabla u, D_s^p u)|vdx\rightarrow 0 \text{ as } \epsilon\rightarrow 0.
     \end{equation}
     Taking the limit as $\epsilon\rightarrow0$ in \eqref{G-L4-2} and applying \eqref{G-L4-6}, \eqref{G-L4-10} and \eqref{G-L4-14}, we get the desired result.
  \end{proof}
We finally establish the following inequality, which is crucial for the limiting argument in Theorem \ref{G-T2}.
  \begin{lemma}\label{G-L5}
      Let $1<p<\infty$ and let $f$ be continuous satisfying \eqref{G-f condn}. Let $u$ be a bounded weak supersolution to \eqref{G}. Then, for all $\psi \in C_c^\infty(\Omega)$ with $\operatorname{supp}\psi=K$ and $0 \leq \psi \leq 1$, $u$ satisifes the inequality
      \begin{align*}
           \int_K |\nabla u&|^{p}\psi(x)^p dx +\int_K \int_{\mathbb{R}^N}{|u(x)-u(y)|^p}\psi(x)^pK_{s,p}(x,y) dy dx \\
           &\leq C\Bigg(\left(M_u(g_1)^p+M_u(g_2)^p+\int_K|\nabla \psi|^p+\int_K\int_{\mathbb{R}^N} {|\psi(x)-\psi(y)|^{p}}{K_{s,p}(x,y)} dy dx\right)\nonumber\\
           &\ \ \ \times(\operatorname{osc} u)^p+(\operatorname{osc} u)\Bigg),
       \end{align*}
      where $\operatorname{osc}u=\sup\limits_{x\in \mathbb{R}^N}u-\inf\limits_{x\in \mathbb{R}^N}u$, $M_u(g_i)=\sup\{|g_i(|t|):-\|u\|_{L^\infty(\Omega)}\leq t \leq \|u\|_{L^\infty(\Omega)}\}$ and $C=C(N,p,K,g_3)>0$ is a constant.
  \end{lemma}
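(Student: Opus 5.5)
This is a Caccioppoli-type energy estimate. The natural approach is to test the weak supersolution inequality \eqref{G-WSE} with
\[
v(x)=(\sup_{\mathbb{R}^N}u-u(x))\psi(x)^p ,
\]
which is nonnegative and lies in $\mathbb{X}_0^{s,p}(\Omega)$: $u$ is bounded and in $W^{1,p}_{\text{loc}}$, and $\psi\in C_c^\infty(\Omega)$. Write $M=\sup u$ and $\bar u=M-u$, so that $0\le\bar u\le\operatorname{osc}u$ and $\nabla\bar u=-\nabla u$. Substituting $v$ gives the inequality
\[
\int_K|\nabla u|^{p-2}\nabla u\cdot\nabla v\,dx+\iint h(u(x)-u(y))(v(x)-v(y))K_{s,p}\,dy\,dx\ \ge\ \int_K f(x,u,\nabla u,D_s^pu)\,v\,dx .
\]

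\emph{Local term.} Since $\nabla v=-\psi^p\nabla u+p\,\bar u\,\psi^{p-1}\nabla\psi$, the local term equals
\[
-\int|\nabla u|^p\psi^p+p\int \bar u\,\psi^{p-1}|\nabla u|^{p-2}\nabla u\cdot\nabla\psi .
\]
The second integral is handled by Young's inequality: it is at most $\tfrac14\int|\nabla u|^p\psi^p+C(\operatorname{osc}u)^p\int_K|\nabla\psi|^p$.

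\emph{Nonlocal term.} I follow the standard Caccioppoli computation of Di Castro--Kuusi--Palatucci. Since $\bar u(x)-\bar u(y)=-(u(x)-u(y))$, write
\[
v(x)-v(y)=-(u(x)-u(y))\psi(x)^p+\bar u(y)\bigl(\psi(x)^p-\psi(y)^p\bigr).
\]
The first piece contributes $-\iint|u(x)-u(y)|^p\psi(x)^pK$. The second piece is bounded using $|\psi(x)^p-\psi(y)^p|\le p|\psi(x)-\psi(y)|\,(\psi(x)^{p-1}+\psi(y)^{p-1})$ together with Young's inequality. This gives at most $\tfrac14\iint|u(x)-u(y)|^p\psi(x)^pK$ plus $C(\operatorname{osc}u)^p\iint|\psi(x)-\psi(y)|^pK$. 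The piece weighted by $\psi(y)^{p-1}$ is handled by symmetrizing in $x$ and $y$, using the symmetry of $K_{s,p}$. One has to be careful that integrals over $\mathbb{R}^N\times\mathbb{R}^N$ reduce to $K\times\mathbb{R}^N$ (and its mirror image), since $\psi$ vanishes off $K$.

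\emph{Right-hand side.} Because $v\ge0$ and $v\le\operatorname{osc}u\cdot\psi^p$, we have $-\int fv\le\int|f|v$. By \eqref{G-f condn}, with $g_i$ evaluated on $|u|\le\|u\|_\infty$,
\[
\int|f|v\le \operatorname{osc}u\int_K\Bigl(M_u(g_1)|\nabla u|^{p-1}+M_u(g_2)(D_s^pu)^{\frac{p-1}{p}}+\|g_3\|_{L^\infty(K)}\Bigr)\psi^p .
\]
Using $\psi^p\le\psi^{p-1}$ (valid since $0\le\psi\le1$) and Young's inequality with exponents $p/(p-1)$ and $p$, the first term is at most $\tfrac14\int|\nabla u|^p\psi^p+C\,M_u(g_1)^p(\operatorname{osc}u)^p|K|$. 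Likewise, since $D_s^pu(x)=\int|u(x)-u(y)|^pK\,dy$ up to the kernel constants, the second term is at most $\tfrac14\iint|u(x)-u(y)|^p\psi(x)^pK+C\,M_u(g_2)^p(\operatorname{osc}u)^p|K|$. The last term is at most $C(K,g_3)\operatorname{osc}u$; this is what produces the lone $(\operatorname{osc}u)$ term in the statement.

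\emph{Conclusion.} Rearranging, the energy terms appear with coefficient $1$ on one side and with total coefficient $\tfrac12$ on the other, so they can be absorbed, which yields the claim. The main obstacle I expect is the nonlocal cross term: the asymmetric weights $\psi(x)^p$ versus $\psi(y)^{p-1}$ make the absorption awkward. If needed, I would instead test with $\bar u(x)\psi(x)^p$ and exploit the elementary inequality $h(a)(\bar a\psi_x^p-\bar b\psi_y^p)$ from the DKP Caccioppoli proof; there the sign of $u(x)-u(y)$ is split into cases, with $\max\{\psi(x),\psi(y)\}$ replacing $\psi(x)$ and being absorbed symmetrically.
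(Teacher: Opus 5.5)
Your proposal is correct and is essentially the paper's proof. It uses the same test function $(\sup u-u)\psi^p$, the same splitting $v(x)-v(y)=-(u(x)-u(y))\psi(x)^p+\bar u(y)(\psi(x)^p-\psi(y)^p)$, and the same use of \eqref{G-f condn} with Young's inequality to produce the $M_u(g_i)^p(\operatorname{osc}u)^p$ terms and the lone $\operatorname{osc}u$ term, followed by absorption of the energy. The asymmetric cross term you flag as the main obstacle is already handled by the $x\leftrightarrow y$ symmetrization you describe, which is how the paper's $J_2$ bound ends up with the weight $\psi(x)^p+\psi(y)^p$, so the DKP-style case split is not needed.
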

  \begin{proof}
      We define the function $w$ by
      $$w(x):=\begin{cases}
            \left(\sup\limits_{z\in \mathbb{R}^N} u(z)-u(x)\right) \psi(x)^p, & \ x\in \Omega, \\
            0, & \ x \in \mathbb{R}^N \setminus \Omega.
        \end{cases}$$
       Taking $w$ as a test function for the weak supersolution $u$ in \eqref{G}, we get
       \begin{align}\label{G-L5-1}
           \int_K &f(x,u,\nabla u, D_s^p u)wdx\nonumber\\
           &\leq \int_K |\nabla u|^{p-2} \nabla u \cdot \nabla w dx + \int_{\mathbb{R}^N} \int_{\mathbb{R}^N} h(u(x)-u(y))(w(x)-w(y))K_{s,p}(x,y) dy dx\nonumber\\
           &=\int_K |\nabla u|^{p-2} \nabla u \cdot \left(-\psi^p \nabla u  + p \psi^{p-1}\left(\sup\limits_{z\in\mathbb{R}^N} u(z)-u \right)\nabla \psi\right)dx\nonumber\\
           &\quad +\iint_{A(K)}h(u(x)-u(y))\left((\psi(x)^p-\psi(y)^p)\left(\sup\limits_{z\in\mathbb{R}^N} u(z)-u(y)\right)-(u(x)-u(y))\psi(x)^p\right)\nonumber\\
           &\hspace{1.5cm} \times K_{s,p}(x,y)dxdy.
       \end{align}
       Simplifying and rearranging the terms in \eqref{G-L5-1}, we get
       \begin{align}\label{G-L5-2}
           \int_K& |\nabla u|^{p}\psi(x)^p dx +\int_K \int_{\mathbb{R}^N}{|u(x)-u(y)|^p}\psi(x)^pK_{s,p}(x,y) dy dx \nonumber\\
           &\leq -\int_K f(x,u(x),\nabla u(x), D_s^p u(x))wdx+ p\underbrace{\int_\Omega \psi^{p-1} (\sup\limits_{z\in\mathbb{R}^N} u(z)-u) |\nabla u|^{p-2} \nabla u \cdot \nabla \psi  dx}_{J_1}  \nonumber \\
        &\quad + \underbrace{\iint_{A(K)}h(u(x)-u(y))(\psi(x)^p-\psi(y)^p)\left(\sup\limits_{z\in\mathbb{R}^N} u(z)-u(y)\right)K_{s,p}(x,y) dy dx}_{J_2}.
       \end{align}
       From \eqref{G-f condn}, we have
       \begin{align}\label{G-L5-5}
           \Bigg|\int_K f(x,u,\nabla u, D_s^p u)wdx\Bigg|&\leq \int_K\left( g_1(|t|)|\nabla u|^{p-1}+g_2(|t|)|D_s^p u|^{\frac{p-1}{p}}+g_3(x)\right)(\operatorname{osc} u)\psi^pdx\nonumber\\
           &\leq\int_K\left( M_u(g_1)|\nabla u|^{p-1}+M_u(g_2)|D_s^p u|^{\frac{p-1}{p}}\right)(\operatorname{osc} u)\psi^p dx\nonumber\\
           &\ \ \ \quad\quad+C(K,g_3)(\operatorname{osc} u),
       \end{align}
       where $C(K, g_3)=\|g_3\|_{L^\infty(K)}|K|$. For $\delta>0$ and $k>1$, we have the Young inequality given by
       \begin{equation}\label{G-Young}
           |ab|\leq \delta |a|^k+C(\delta,k)|b|^\frac{k}{k-1}, a,b\in \mathbb{R}.
       \end{equation}
       Observe that $0\leq \psi \leq 1$. Thus, using \eqref{G-Young} with $k=\frac{p}{p-1},a=|\nabla u|^{p-1}$ and $b=M_u(g_1+g_2)\operatorname{osc} u$, we deduce
       \begin{align}\label{G-L5-6}
           M_u(g_1)\int_K |\nabla u|^{p-1}(\operatorname{osc} u)\psi^p dx&\leq \delta \int_K |\nabla u|^p \psi^p dx+ C(\delta,p) M_u(g_1)^p (\operatorname{osc} u)^p |K|.
       \end{align}
       Similarly, we get
       \begin{align}\label{G-L5-7}
           M_u(g_2)\int_K |D_s^p u|^{\frac{p-1}{p}}(\operatorname{osc} u)\psi^p dx&\leq \delta \int_K \int_{\mathbb{R}^N}{|u(x)-u(y)|^p}\psi(x)^pK_{s,p}(x,y) dy dx\nonumber\\
           &\ \ \ \quad\quad+ C(\delta,p) M_u(g_2)^p (\operatorname{osc} u)^p |K|.
       \end{align}
       Substituting \eqref{G-L5-6} and \eqref{G-L5-7} in \eqref{G-L5-5}, we obtain
       \begin{align}\label{G-L5-8}
           \Bigg|\int_K f(x,u,&\nabla u, D_s^p u)wdx\Bigg|\nonumber\\
           &\leq \delta \Bigg(\int_K |\nabla u|^p \psi^p dx+\int_K \int_{\mathbb{R}^N}{|u(x)-u(y)|^p}\psi(x)^pK_{s,p}(x,y) dy dx\Bigg)\nonumber\\
           &\ \ \ +C(\delta,p,K)(M_u(g_1)^p+M_u(g_2)^p)(\operatorname{osc} u)^p.
       \end{align}
       For an arbitrary $\delta>0$, using \eqref{G-Young}, it is easy to see that 
       \begin{align}\label{G-L5-3}
            J_1 &\leq \delta \int_\Omega |\nabla u|^{p}\psi(x)^p dx + C(p,\delta) \int_\Omega p^p |\nabla \psi|^p(\operatorname{osc} u)^p dx.
       \end{align}
       Estimating $J_2$ similar to (3.14) in \cite[Lemma 3.2]{LG2026}, we get
       \begin{align}\label{G-L5-4}
           J_2        &\leq  C(p) \delta \iint_{W(\Sigma)} {|u(x)-u(y)|^{p}(|\psi(x)|^p +|\psi(y)|^p)}{K_{s,p}(x,y)} dy dx \nonumber\\
        &\ \ \ + C(p, \delta) (\operatorname{osc} u)^p \int_K\int_{\mathbb{R}^N} {|\psi(x)-\psi(y)|^{p}}{K_{s,p}(x,y)} dy dx.
       \end{align}
       Combining \eqref{G-L5-8}--\eqref{G-L5-4}, substituting in \eqref{G-L5-2}, and finally choosing $\delta>0$ sufficiently small, the required result is obtained.
  \end{proof}
  We conclude with the proof of Theorem \ref{G-T2}.  
  
  \medskip
  \noindent {\it{\bf{Proof of Theorem} \ref{G-T2}.}}
      Let $v\in C_c^\infty(\Omega)$ with $v\geq 0$ and $K=\operatorname{supp}v$. Choose $\epsilon'>0$ sufficiently small such that $K \subset \Omega_\epsilon$ for all $0<\epsilon\leq \epsilon'$. For the remainder of the proof, we consider only $\epsilon\in (0,\epsilon')$. By Lemma \ref{G-L3}, we obtain that
      \begin{equation}\label{G-T2-1}
          H_{s,p,\Omega_\epsilon}(u_\epsilon,v)\geq \int_{\Omega_\epsilon}f^\epsilon(x,u_\epsilon, \nabla u_\epsilon D_s^p u_\epsilon)vdx.
      \end{equation}
      Consider two compact sets $K_1, \ K_2$ and {a Lipschitz open set $\mathcal{D}$ such that $K\subset \mathcal{D} \subset K_1\subset K_2\subset \Omega_\epsilon$.} Now, consider a function $\psi\in C_c^\infty(\Omega_\epsilon)$ with $0\leq\psi \leq 1$, $\psi \equiv 1$ in $K_1$ and $\operatorname{supp}\psi=K_2$. Applying Lemma \ref{G-L5}, we get
      \begin{align}\label{G-T2-2}
          \int_{K_1} |\nabla& u_\epsilon|^p  dx +  \int_{K_1} \int_{\mathbb{R}^N}{|u_\epsilon(x)-u_\epsilon(y)|^p}{K_{s,p}(x,y)} dy dx  \nonumber\\
          &\leq  \int_{K_2} |\nabla u_\epsilon|^p\psi(x)^p dx +  \int_{K} \int_{\mathbb{R}^N}|u_\epsilon(x)-u_\epsilon(y)|^p\psi(x)^pK_{s,p}(x,y) dy dx  \nonumber\\
          &\leq C\Bigg(\left(M_{u_\epsilon}(g_1)^p+M_{u_\epsilon}(g_2)^p+\int_K|\nabla \psi|^p+\int_K\int_{\mathbb{R}^N} {|\psi(x)-\psi(y)|^{p}}{K_{s,p}(x,y)} dy dx\right)\nonumber\\
           &\ \ \ \times(\operatorname{osc} u_\epsilon)^p+(\operatorname{osc} u_\epsilon)\Bigg).
      \end{align}
      Observe that by \cite[Remark 3.2]{BM21}, we have $$|M_{u_\epsilon}(g_1)|+|M_{u_\epsilon}(g_2|)|+(\operatorname{osc} u_\epsilon)<|M_{u}(g_1)|+|M_{u}(g_2|)|+(\sup u -\inf u_{\epsilon'})<\infty$$ for all $0<\epsilon<\epsilon'$. Therefore, \eqref{G-T2-2} gives
      \begin{equation}\label{G-T2-3}
          \int_{K_1} |\nabla u_\epsilon|^p  dx +  \int_{K_1} \int_{\mathbb{R}^N}{|u_\epsilon(x)-u_\epsilon(y)|^p}{K_{s,p}(x,y)} dy dx \leq C.
      \end{equation}
      Proceeding similar to the proof of \cite[Theorem 1.1]{LG2026}, as $\epsilon\rightarrow 0$, we deduce that up to a subsequence, $\nabla u_\epsilon\rightharpoonup u$ in $L^p(K_1)$ and $u_\epsilon \rightharpoonup u$ in $W^{1,p}(K_1)$. Clearly, $u_\epsilon \rightharpoonup u$ {also in $W^{1,p}(\mathcal{D})$. By \cite[Corollary 4.34, Theorem 4.54]{DD12} and the fact that the embedding $C^{0,\alpha}(\mathcal{D}) \hookrightarrow L^r(\mathcal{D})$ is continuous and compact for $1\leq r<\infty$,} we get $u_\epsilon\rightarrow u$ up to a subsequence in $L^p(K_1)$. Following the steps of proof of \cite[Theorem 1.1]{LG2026}, we also get that as $\epsilon\rightarrow 0$,
      \begin{equation}\label{G-T2-4}
          \frac{u_\epsilon(x)-u_\epsilon(y)}{|x-y|^{\frac{N}{p}+s}}\rightharpoonup \frac{u(x)-u(y)}{|x-y|^{\frac{N}{p}+s}} \text{ in } L^p(K\times \mathbb{R}^N), \text{ and also in }  L^p(\mathbb{R}^N\times K).
      \end{equation}
      Finally, we also get $|\nabla u_\epsilon|^{p-2}\nabla u_\epsilon \rightharpoonup|\nabla u|^{p-2}\nabla u$ in $L^\frac{p}{p-1}(K_1)$ and
      \begin{equation}\label{G-T2-5}
          \frac{h(u_\epsilon(x)-u_\epsilon(y))}{|x-y|^{\frac{(N+ps)(p-1)}{p}}}\rightharpoonup \frac{h(u(x)-u(y))}{|x-y|^{\frac{(N+ps)(p-1)}{p}}},
      \end{equation}
      in both $L^{\frac{p}{p-1}}(K\times \mathbb{R}^N)$ and $L^{\frac{p}{p-1}}(\mathbb{R}^N\times K)$. Therefore, we obtain 
      \begin{equation}\label{G-T2-6}
          H_{s,p,\Omega_\epsilon}(u_\epsilon,v) \rightarrow H_{s,p,\Omega}(u,v) \text{ as } \epsilon\rightarrow 0.
      \end{equation}
      Now, consider a function $\phi \in C_c^\infty(\Omega_\epsilon)$ with $\phi \equiv 1$ in $K$ and $\operatorname{supp}\phi=K_1$. Take $v=\phi(u-u_\epsilon)$. Since $u-u_\epsilon\rightarrow 0$ in $L^p(K_1)$, using the H\"older inequality, it is easy to see that
      \begin{align}\label{G-T2-8}
          \lim\limits_{\epsilon\rightarrow 0}\int_{K_1}|\nabla u_\epsilon|^{p-2}\nabla u_\epsilon \nabla v&=\lim\limits_{\epsilon\rightarrow 0}\Bigg(\int_{K_1}|\nabla u_\epsilon|^{p-2}\nabla u_\epsilon \cdot \nabla \phi (u-u_\epsilon)dx\nonumber\\
          & \quad +\int_{K_1}|\nabla u_\epsilon|^{p-2}\nabla u_\epsilon \cdot (\nabla u-\nabla u_\epsilon) \phi dx\Bigg)\nonumber\\
          &=\lim\limits_{\epsilon\rightarrow 0}\int_{K_1}\left(|\nabla u_\epsilon|^{p-2}\nabla u_\epsilon-|\nabla u|^{p-2}\nabla u\right) \cdot (\nabla u-\nabla u_\epsilon) \phi dx\nonumber\\
         &:=-\lim\limits_{\epsilon \rightarrow 0}J'_1.
      \end{align}
      Thus, taking $\epsilon\rightarrow 0$ in \eqref{G-T2-1} with $v=\phi(u-u_\epsilon)$ and applying \eqref{G-T2-8} gives 
      \begin{align}\label{G-T2-7}
           \lim\limits_{\epsilon\rightarrow 0}\Bigg(&\int_{K_1}|\nabla u_\epsilon|^{p-2}\nabla u_\epsilon \cdot (\nabla u-\nabla u_\epsilon) \phi dx+\iint_{A(K_1)} h(u_\epsilon(x)-u_\epsilon(y))(v(x)-v(y))K_{s,p}(x,y)dydx\nonumber\\
           &\geq \lim\limits_{\epsilon\rightarrow 0}\int_{K_1}f^\epsilon(x,u_\epsilon, \nabla u_\epsilon, D_s^p u_\epsilon)vdx.
      \end{align}
      {Estimating the LHS of \eqref{G-T2-7} following the steps similar to pages 2006--2007 of \cite[Theorem 1.1]{BM21}}, we get
      \begin{align}\label{G-T2-9}
          \lim\limits_{\epsilon\rightarrow 0}&\iint_{A(K_1)} h(u_\epsilon(x)-u_\epsilon(y))(v(x)-v(y))K_{s,p}(x,y)dydx\nonumber\\
          &=-\lim\limits_{\epsilon\rightarrow 0}\int_{K_1}\int_{\mathbb{R}^N}(h(u(x)-u(y))-h(u_\epsilon(x)-u_\epsilon(y)))\nonumber\\
          &\quad \ \ \ \times(u(x)-u(y)-(u_\epsilon(x)-u_\epsilon(y)))\phi(x)K_{s,p}(x,y)dydx\nonumber\\
          &:=-\lim\limits_{\epsilon\rightarrow 0}J'_2.         
      \end{align}
      Using the algebraic inequality
      $$|a-b|^p\leq \frac{1}{2^{p-1}}|h(a)-h(b)|(a-b), a,b\in \mathbb{R}, p\geq 2,$$
      we deduce
      \begin{align}
      0&\leq \int_K|\nabla u-\nabla u_\epsilon|^pdx \leq CJ'_1,\text{ and }\label{G-T2-12}\\
          0&\leq \int_{K}\int_{\mathbb{R}^N}\Big|u(x)-u(y)-(u_\epsilon(x)-u_\epsilon(y))\Big|^pK_{s,p}(x,y)dydx\leq CJ'_2, \label{G-T2-10}
      \end{align}
      for a constant $C>0$ and for $p \geq 2$. Substituting \eqref{G-T2-10} in \eqref{G-T2-9}, we have
      \begin{align}\label{G-T2-11}
          \lim\limits_{\epsilon\rightarrow 0}&\iint_{A(K_1)} h(u_\epsilon(x)-u_\epsilon(y))(v(x)-v(y))K_{s,p}(x,y)dydx\nonumber\\
          &\leq -\lim\limits_{\epsilon\rightarrow 0}\int_{K}\int_{\mathbb{R}^N}\Big|u(x)-u(y)-(u_\epsilon(x)-u_\epsilon(y))\Big|^pK_{s,p}(x,y)dydx.
      \end{align}
      Applying \eqref{G-T2-12} and \eqref{G-T2-11} in \eqref{G-T2-7}, we arrive at
      \begin{align}\label{G-T2-13}
          0&\leq \lim\limits_{\epsilon\rightarrow 0}\Bigg(\int_K|\nabla u_\epsilon-\nabla u|^pdx+\int_{K}\int_{\mathbb{R}^N}\Big|u(x)-u(y)-(u_\epsilon(x)-u_\epsilon(y))\Big|^pK_{s,p}(x,y)dydx\Bigg)\nonumber\\
          &\leq -\int_{K_1}\int_{K_1}f^\epsilon(x,u_\epsilon(x), \nabla u_\epsilon(x), D_s^p u_\epsilon(x))(u-u_\epsilon)\phi dx.
      \end{align}
      Recall that $0\leq \phi \leq 1$. Thus, we obtain from \eqref{G-f condn}, \eqref{G-L2-2}, \eqref{G-T2-3} and the H\"older inequality that
      \begin{align}\label{G-T2-14}
          \Bigg|\int_{K_1}&f^\epsilon(x,u_\epsilon(x), \nabla u_\epsilon(x), D_s^p u_\epsilon(x))(u-u_\epsilon)\phi dx\Bigg|\nonumber\\
          &\leq \int_{K_1}\sup\limits_{y\in B_{r(\epsilon)}(x)}|f(y,u_\epsilon(x), \nabla u_\epsilon(x), D_s^p u_\epsilon(x))|(u-u_\epsilon)\phi dx\nonumber\\
          &\leq (M_u(g_1)+M_u(g_2)+1)\Bigg(\int_{K_1} \Big(|\nabla u_\epsilon|^{\frac{p-1}{p}}+|D_s^p u_\epsilon|^{p-1}+\|g_3\|_{L^\infty(K_1)}\Big)(u-u_\epsilon)\phi dx \Bigg)\nonumber\\
          &\leq C\Bigg(\int_{K_1}|\nabla u_\epsilon|^pdx\Bigg)^\frac{p-1}{p}\Bigg(\int_{K_1}|u-u_\epsilon|^pdx\Bigg)^\frac{1}{p}\nonumber\\
          &\ \ \ +C\Bigg(\int_{K_1}\int_{\mathbb{R}^N}{|u_\epsilon(x)-u_\epsilon(y)|^p}{K_{s,p}(x,y)} dydx\Bigg)^\frac{p-1}{p}\Bigg(\int_{K_1}|u-u_\epsilon|^pdx\Bigg)^\frac{1}{p}\nonumber\\
          & \ \ \ +C\Bigg(\int_{K_1}|\phi|^\frac{p}{p-1}dx\Bigg)^\frac{p-1}{p}\Bigg(\int_{K_1}|u-u_\epsilon|^pdx\Bigg)^\frac{1}{p}\nonumber\\
          &\leq C\|u-u_\epsilon\|_{L^p(K_1)}\nonumber\\
          &\rightarrow 0 \text{ as } \epsilon\rightarrow 0.
      \end{align}
      Substituting \eqref{G-T2-14} in \eqref{G-T2-13}, we deduce
      \begin{equation}\label{G-T2-15}
          \lim\limits_{ \epsilon\rightarrow 0}\int_K|\nabla u_\epsilon-\nabla u|^pdx+\int_{K}\int_{\mathbb{R}^N}\Big|u(x)-u(y)-(u_\epsilon(x)-u_\epsilon(y))\Big|^pK_{s,p}(x,y)dydx=0. 
      \end{equation}
      Using \eqref{G-T2-15} and Lemma \ref{G-L4}, for all $v\in C_c^\infty(\Omega)$ with $\operatorname{supp}v=K$, we get that
      \begin{equation}\label{G-T2-16}
          \lim\limits_{\epsilon\rightarrow 0}\int_K f^\epsilon(x,u_\epsilon,\nabla u_\epsilon, D_s^p u_\epsilon)vdx=\int_K f(x,u,\nabla u, D_s^p u)vdx.
      \end{equation}
      Combining \eqref{G-T2-1}, \eqref{G-T2-6} and \eqref{G-T2-16}, we finally arrive at
      \begin{equation*}
          H_{s,p,\Omega}(u,v)\geq \int_{\Omega}f(x,u, \nabla u, D_s^p u)vdx,
      \end{equation*}
      for all $v\in C_c^\infty(\Omega)$ with $\operatorname{supp}v=K$. Hence the proof is complete.
  \hfill\qedsymbol{} \medskip

\noindent \textbf{Conflict of interest statement:} On behalf of the authors, the corresponding author states that there is no conflict of interest.
\newline
\textbf{Data availability statement:} Data sharing does not apply to this article as no datasets were generated or analyzed during the current study.

\section*{Acknowledgement}
The author R. Lakshmi thanks the financial support provided by the Ministry of Education (formerly known as MHRD), Government of India. SG gratefully acknowledges the financial support for this research work under ARG-MATRICS, grant No: ANRF/ARGM/2025/001570/MTR, Anusandhan National Research Foundation (ANRF), Government of India.


\begin{thebibliography}{10}

\bibitem{BM21}
B. Barrios, M. Medina,
\newblock Equivalence of weak and viscosity solutions in fractional non-homogeneous problems,
\newblock {\em Math. Ann.},
381(3-4):1979--2012, 2021.



\bibitem{BMV24}
S. Biagi, D. Mugnai, E. Vecchi,
\newblock A Brezis-Oswald approach for mixed local and nonlocal operators,
\newblock {\em Commun. Contemp. Math.},
26(2), Paper No. 2250057, 28 pp., 2024.



\bibitem{BLO20}
J. E. M. Braga, R. A. Leit\~ao, J. E. L. Oliveira,
\newblock Free boundary theory for singular/degenerate nonlinear equations with right hand side: a non-variational approach,
\newblock {\em Calc. Var. Partial Differential Equations},
59(2), Paper No. 86, 29 pp., 2020.


\bibitem{B11}
H.~Brezis.
\newblock {\em Functional analysis, {S}obolev spaces and partial differential equations}.
\newblock Springer, New York, 2011.

\bibitem{CIL92}
M. Crandall, H. Ishii, P.-L. Lions, 
\newblock User's guide to viscosity solutions of second order partial differential equations,
\newblock {\em Bull. Am. Math. Soc.}, 27:1--67, 1992.

\bibitem{DFR19}
L. M. Del Pezzo, R. Ferreira, J. D. Rossi,  
\newblock Eigenvalues for a combination between local and nonlocal {$p$}-{L}aplacians,
\newblock {\em Fract. Calc. Appl. Anal.},
22(5):1414--1436, 2019.

\bibitem{DD12}
F. Demengel, G. Demengel,
\newblock {\em Functional Spaces For The Theory Of Elliptic Partial Differential Equations},
\newblock Springer, London; EDP Sciences, Les Ulis, XVIII, 465, 
2012.


\bibitem{DKP16}
A. Di Castro, T. Kuusi, G. Palatucci,
\newblock Local behavior of fractional {$p$}-minimizers,
\newblock {\em Ann. Inst. H. Poincar\'e{} C Anal. Non Lin\'eaire},
33(5):1279--129, 2016.

\bibitem{DPV12}
E. Di Nezza, G. Palatucci, E. Valdinoci,
\newblock Hitchhiker's guide to the fractional {S}obolev spaces,
\newblock {\em Bull. Sci. Math.},
136(5):521--573, 2012.


\bibitem{FRZ24}
Y. Fang, V. D. R\u adulescu, C. Zhang,
\newblock Equivalence of weak and viscosity solutions for the nonhomogeneous double phase equation,
\newblock{\em Math. Ann.},
388(3):2519--2559, 2024.

\bibitem{FZ23}
Y. Fang, C. Zhang,
\newblock On weak and viscosity solutions of nonlocal double phase equations,
\newblock{\em Int. Math. Res. Not. IMRN},
2023(5):3746--3789, 2023.


\bibitem{GarainKinnunen}
P. Garain, J. Kinnunen,
\newblock On the regularity theory for mixed local and nonlocal quasilinear elliptic equations,
\newblock {\em Trans. Amer. Math. Soc.},
375(8):5393--5423, 2022.



\bibitem{GLZ26}
S. Ghosh, and R. Lakshmi, C. Zhang, 
\newblock {\em On equivalence of weak and viscosity solutions to nonlocal double phase problems with nonhomogeneous data}, 
Preprint arXiv:2505.16461, 2025. 


\bibitem{I95}
H. Ishii,
\newblock On the equivalence of two notions of weak solutions, viscosity solutions and distribution solutions,
\newblock {\em Funkcial. Ekvac.},
38(1):101--120, 1995.

\bibitem{IN10}
H. Ishii, G. Nakamura, 
\newblock A class of integral equations and approximation of {$p$}-{L}aplace equations,
\newblock {\em Calc. Var. Partial Differential Equations},
37(3-4):485--522, 2010.
 

\bibitem{JJ12}
V. Julin, P. Juutinen,
\newblock A new proof for the equivalence of weak and viscosity solutions for the {$p$}-{L}aplace equation,
\newblock {\em Comm. Partial Differential Equations},
37(5):934--946, 2012.

\bibitem{JLM01}
P. Juutinen, P. Lindqvist, J. J. Manfredi,
\newblock On the equivalence of viscosity solutions and weak solutions for a quasi-linear equation,
\newblock {\em SIAM J. Math. Anal.},
33(3):699--717, 2001.

\bibitem{JLP10}
P. Juutinen, T. Lukkari, M. Parviainen
\newblock Equivalence of viscosity and weak solutions for the {$p(x)$}-{L}aplacian,
\newblock {\em Ann. Inst. H. Poincar\'{e} Anal. Non Lin\'{e}aire},
27(6):1471--1487, 2010.

\bibitem{K04}
 S. Koike,
 \newblock A Beginner's Guide to the Theory of Viscosity Solutions,
 \newblock {\em MSJ Memoirs 13, Mathematical Society of Japan}, Tokyo, 2004.


\bibitem{KKL19} 
J. Korvenp\"a\"a, T. Kuusi, E. Lindgren,
\newblock Equivalence of solutions to fractional {$p$}-{L}aplace type equations,
\newblock {\em J. Math. Pures Appl. (9)},
132:1--26, 2019.


\bibitem{LGG24}
R. Lakshmi, R. Kr. Giri, S. Ghosh,
\newblock A weighted eigenvalue problem for mixed local and nonlocal operators with potential, 
\newblock {\em Math. Nachr.}, 
299(2):367--396, 2026.

\bibitem{LG2026}
R. Lakshmi, S. Ghosh,
\newblock Equivalence of weak and viscosity solutions to mixed local and nonlocal $p$-Laplace equation, 
\newblock {\em Z. Anal. Anwend.}, 2026. \url{https://doi.org/10.4171/ZAA/1815}


\bibitem{L23}
G. Leoni,
\newblock {\em A First Course In Fractional {S}obolev Spaces},
\newblock American Mathematical Society, Providence, RI, XV, 586, 2023.

\bibitem{Lind16}
P. Lindqvist,
\newblock Notes on the infinity Laplace equation, Springer, 2016.


\bibitem{Lind19}
P. Lindqvist,
\newblock Notes on the stationary $p$-Laplace equation, Springer, Berlin, 2019.

\bibitem{LM07}
P. Lindqvist, J. Manfredi, 
\newblock Viscosity solutions of the evolutionary $p$-Laplace equation,
\newblock {\em Differ. Integr. Equ.}, 20:1303--1319, 2007.

\bibitem{L83}
P.-L. Lions,
\newblock Optimal control of diffusion processes and {H}amilton-{J}acobi-{B}ellman equations. {II}. {V}iscosity solutions and uniqueness,
\newblock {\em Comm. Partial Differential Equations},
8(11):1229--1276, 1983.

\bibitem{MO19}
M. Medina, P. Ochoa,
\newblock On viscosity and weak solutions for non-homogeneous {$p$}-{L}aplace equations,
\newblock {\em Adv. Nonlinear Anal.},
8(1):468--481, 2019.

\bibitem{MO23}
M. Medina, P. Ochoa,
\newblock Equivalence of solutions for non-homogeneous {$p(x)$}-{L}aplace equations,
\newblock {\em Math. Eng.},
5(2), Paper No. 044, 19 pp., 2023.

\bibitem{SZ23}
B. Shang, C. Zhang,
\newblock A strong maximum principle for mixed local and nonlocal {$p$}-{L}aplace equations,
\newblock {\em Asymptot. Anal.},
133(1-2):1--12, 2023.

\bibitem{S18}
J. Siltakoski,
\newblock Equivalence of viscosity and weak solutions for the normalized {$p(x)$}-{L}aplacian,
\newblock {\em Calc. Var. Partial Differential Equations},
57(4), Paper No. 95, 20 pp., 2018.



\end{thebibliography}
\end{document}